\numberwithin{equation}{section}
\newcommand{\rr}{\mathbb{R}}
\newcommand{\be}{\begin{eqnarray*}}
\newcommand{\bel}{\begin{eqnarray}}
\newcommand{\ee}{\end{eqnarray*}}
\newcommand{\eel}{\end{eqnarray}}
\newcommand{\ba}{\begin{aligned}}
\newcommand{\ea}{\end{aligned}}
\newcommand{\de}{\Delta}
\newcommand{\na}{\nabla}
\newcommand{\ep}{\epsilon}
\newcommand{\pa}{\partial}
\newcommand{\nb}{\nonumber}
\newcommand{\CC}{\frac{K}{2}}
\newcommand{\CI}{\frac{2}{K}}
\newtheorem{thm}{Theorem}
\newtheorem{lem}{Lemma}
\newtheorem{rmk}{Remark}
\numberwithin{rmk}{section}
\numberwithin{lem}{section}
\numberwithin{thm}{section}
\newcommand\R{{\mathbb R}}
\title{Boundary layer models of the Hou-Luo scenario}
\date{\today}
\author{Siming He} \thanks{simhe@math.duke.edu, Department of Mathematics, Duke University}
\author{Alexander Kiselev} \thanks{kiselev@math.duke.edu, Department of Mathematics, Duke University}
\begin{document}
\begin{abstract}
Finite time blow up vs global regularity question for 3D Euler equation of fluid mechanics is a major open problem.
Several years ago, Luo and Hou \cite{HouLuo14} proposed a new finite time blow up scenario based on extensive numerical simulations.
The scenario is axi-symmetric and features fast growth of vorticity near a ring of hyperbolic points of the flow located at the
boundary of a cylinder containing the fluid. An important role is played by a small boundary layer where intense growth is observed.
Several simplified models of the scenario have been considered, all leading to finite time blow up \cite{CKY15,CHKLVY17,HORY,KT1,HL15,KY1}.
In this paper, we propose two models that are designed specifically to gain insight in the evolution of fluid near the hyperbolic stagnation point of the
flow located at the boundary. One model focuses on analysis of the depletion of nonlinearity effect present in the problem. Solutions to this model are shown to be
globally regular. The second model can be seen as an attempt to capture the velocity field near the boundary to the next order of
accuracy compared with the one-dimensional models such as \cite{CKY15,CHKLVY17}. Solutions to this model blow up in finite time.
\end{abstract}

\maketitle

\section{Introduction}
The problem of finite time blow vs global regularity in solutions of the 3D Euler equation of fluid mechanics is one of the major
open questions of applied analysis. Several years ago, Hou and Luo \cite{HouLuo14} produced a very careful and convincing numerical simulation
proposing a new scenario for finite time blow up. The simulation is axi-symmetric, and takes place in an infinite vertical cylinder with rigid
boundary. The boundary conditions are no penetration; the solution is periodic in vertical direction and obeys additional symmetries with respect
to $z=0$ plane. Although the symmetries make the Hou-Luo scenario look special, a recent experimental paper classifying the structure of regions
of ``extreme dissipation" in turbulent fluid \cite{nsexp} has found that most of these are formed by colliding masses of fluid with a hyperbolic
point of the flow in between - geometry related to the Hou-Luo scenario. Thus the Hou-Luo scenario can be thought of as an idealized blueprint for
a fairly robust small scale creation and energy dissipation mechanism present in turbulence.

It is well-known \cite{MB} that the 2D Boussinesq system is a good proxy for the 3D axi-symmetric Euler equation away from the axis, and it has a
bit more compact form. Hence our starting point for the derivation of the models will be the 2D inviscid Boussinesq system set in a half-plane $\R^2_+:$
\begin{align}
\left\{\begin{array}{rrrr}\ba\pa_t\omega+(u\cdot \na)\omega=&\pa_{x_1}\rho;\\
\pa_t\rho+(u\cdot\na)\rho=&0;\\
u=\na ^{\perp}(-\de_D)^{-1} &\omega;\\
\omega(t=0,\cdot)=\omega_0&(\cdot),\quad \rho(t=0,\cdot)=\rho_0(\cdot),
\ea\end{array}\right.\label{Boussinesq}
\end{align}
where $\na^{\perp}=(-\pa_{x_2},\pa_{x_1})$,
 and $\de_D$ stands for the Laplacian operator with Dirichlet boundary condition on the boundary $x_2=0.$
The Biot-Savart law in \eqref{Boussinesq} corresponds to the no penetration boundary condition $u_2(x_1,0) =0.$
The finite time blow-up versus global regularity problem for \eqref{Boussinesq} subject to smooth initial data is, naturally, open,
and has been listed among Yudovich's eleven great problems of mathematical hydrodynamics \cite{Y11}.
In the Boussinesq setting, to arrive at an analog of the Hou-Luo scenario, one takes the initial data $\rho_0$ even and $\omega_0$ odd with respect to the $x_1$ variable.
The support of the initial data lies away from the $x_2$ axis but contains some part of the $x_1$ axis.

In the recent years, there has been much work on analyzing the Hou-Luo scenario and similar settings. Kiselev and Sverak \cite{KS14} deployed this
geometry of the initial data on the unit disk to construct a solution of the 2D Euler equation whose gradient grows at a double exponential rate
in time. The 2D Euler equation in vorticity form can be obtained from \eqref{Boussinesq} by setting $\rho_0(x)=0.$ The 2D Euler equation
is well known to be globally regular - but the double exponential rate is also known as the fastest possible growth rate of derivatives \cite{MB,MP,Wolibner}.
This result confirms the proclivity of the Hou-Luo initial data set up to lead to extreme small scale creation. One of the key discoveries of
\cite{KS14} has been an efficient formula for the Biot-Savart law in the neighborhood of the origin - a hyperbolic point on the boundary near which
explosive growth of derivatives happens. The formula takes form
\begin{equation}\label{mainlemma}
u_{i}(x,t) = \frac{x_i (-1)^i}{4\pi} \int_{Q(x)}\frac{y_1y_2}{|y|^4}\omega(y,t)\,dy + x_i B_i(x,t), \,\,\,\|B_i\|_\infty \leq C \|\omega_0\|_\infty,
\end{equation}
where $Q(x)$ is a region $y_1 \geq x_1,$ $y_2 \geq x_2$ in the half-disk $D^+$ where $x_1 \geq 0;$ the origin is located at the lowest point of the disk.
The formula isolates the ``main term" that has, asymptotically, log-Lipshitz behavior in the distance from vorticity support to the
origin. It is this behavior that saturates bounds of the Yudovitch theory \cite{Yudth,MP} and makes double exponential growth possible.

In attempting to pass from the analysis of 2D Euler solutions \cite{KS14} to Boussinesq \eqref{Boussinesq}, one encounters several difficulties.
Perhaps the main issue is that the vorticity is uniformly bounded for the 2D Euler solutions, while for Boussinesq system vorticity may grow.
This makes the error terms in the fluid velocity representation \eqref{mainlemma} potentially large and competitive with the main term.
Sufficient control of the fluid velocity, on the other hand, appears to be crucial in achieving understanding of dynamics necessary to
prove finite time blow up. A number of simplified models have been considered \cite{CKY15,CHKLVY17,DKX,HORY,KT1,HL15,KY1}; in all these models
the Biot-Savart law is replaced by a version that is simpler to control. Some of these models also feature the forcing term $\partial_{x_1} \rho$
replaced by sign definite mean field expression $\rho/x_1.$

There is an exciting recent series of works by Elgindi and Elgindi and Jeong \cite{E1,EJ1,EJ2} that is also related to Hou-Luo scenario.
In papers \cite{EJ1,EJ2}, the authors consider 2D Boussinesq system and 3D axi-symmetric Euler equation respectively in domains with corners
or wedges; they prove local well-posedness theorem for a class of rough initial data, and then show finite time blow up in a sense that a stronger
singularity forms. The geometry of the construction features a hyperbolic point of the flow similar to the Hou-Luo scenario. In a recent work
\cite{E1}, Elgindi carries out a similar construction in the whole space, without a boundary. The initial vorticity data is H\"older regular,
with just one point where it is not smooth. The H\"older exponent $\alpha$ that tracks regularity of the initial data plays a role of a small
parameter which essentially allows one to identify the ``main term" with the blow up dynamics accessible to analysis.
A related idea has been used in \cite{KRYZ} to construct blowing up solutions of modified SQG equation. Here the geometry is identical to the one
we consider in this paper: a half-plane with a fixed hyperbolic point on the boundary created by symmetries. The Biot-Savart terms conducive to and
 opposing blow up have the same order, and the exponent $\alpha$ modulating singularity of the Biot-Savart law plays a role of the small parameter
that allows to establish dominance of the blow up term.

In the problem of smooth data for the 2D inviscid Boussinesq system (and 3D axi-symmetric Euler), there is no obvious small parameter
that can be leveraged in a similar way. Hence the problem of blow up vs global regularity for smooth solutions remains open. The purpose of this paper
is to analyze two models aimed at the improved understanding of the Biot-Savart law in the smooth case. In the first model, the goal is to gain further
insight into mechanisms driving nonlinearity depletion in the geometry of Hou-Luo scenario. In this model, we discard part of the blow up-conducive
terms in the Biot-Savart law and show that the resulting dynamics is, in fact, globally regular. The mechanism behind this effect is thinning of the
filament of vorticity due to shear as the support of vorticity approaches the origin. This effect is definitely present in the real equation,
even though the balance of terms may be different - so understanding the workings of this phenomena is useful. The model is obtained by retaining only the
core of the nonlinearity depletion effect and is given by a family of coupled ODE
\begin{align}\label{non-local-ODE}
\partial_t G(x,t)=\int_0^1\frac{(y-K^{-1}x)e^{-G(y,t)}}{(y^2+e^{-2G(y,t)})^2}dy,\quad G(x,0)=0.
\end{align}
Here $e^{-G(x,t)}$ can be thought of as an $x_1$ location of the front of vorticity region (with $x=x_2$) advancing towards the origin at time $t,$
and $K$ is a universal constant near $1.$ 
We will sketch the derivation of \eqref{non-local-ODE} in Section~\ref{nlodederiv}
below. Finite time blow up then corresponds to $G(x,t)$ becoming infinite in finite time - this would indicate collision
of vorticity masses of different sign in the original system. Observe that due to the structure of the model \eqref{non-local-ODE},
$G(x,t)$ is a linear function of $x,$ and we can set $G(x,t) = A(t)-B(t)x.$
Our main result is the following:
\begin{thm}\label{Thm_1}
Consider the model \eqref{non-local-ODE}. If $1\leq K \leq 1.3$, the solution to the system \eqref{non-local-ODE} is global in time. For large times, the following asymptotic behavior holds:
\begin{align}
e^{A(t)}\approx &B(t)^{\frac{K}{3}},\\
B(t)\approx &t^{1/(2-K)}
\end{align}
for $t \geq 1.$
\end{thm}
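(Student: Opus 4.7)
The plan is to reduce \eqref{non-local-ODE} to a planar ODE system for $(A(t), B(t))$ and then carry out Laplace-type asymptotic analysis on the two integrals that appear. Since the right-hand side of \eqref{non-local-ODE} is affine in $x$ and the initial data $G(\cdot,0)\equiv 0$ is affine, the ansatz $G(x,t) = A(t) - B(t)x$ is preserved by the flow. Matching the $x$-independent and $x$-linear parts gives
\[
\dot A = I_0(A,B) := \int_0^1 \frac{y \, e^{-A+By}}{(y^2+e^{-2A+2By})^2}\,dy, \qquad
\dot B = K^{-1} I_1(A,B) := K^{-1} \int_0^1 \frac{e^{-A+By}}{(y^2+e^{-2A+2By})^2}\,dy,
\]
with $A(0)=B(0)=0$. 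Both integrands are positive, so $A$ and $B$ are strictly increasing. Local existence is immediate, and global regularity reduces to a priori control on these two scalars.

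Next I would extract the large-$A,B$ asymptotics of $I_0$ and $I_1$. Write $\phi(y) = e^{-A+By}$. A direct calculation (minimizing $By - \log y$ at $y = 1/B$) shows $\phi(y) > y$ on $[0,1]$ precisely when $A < 1 + \log B$. Under this bootstrap hypothesis, the denominator factors as $\phi^4(1+y^2/\phi^2)^2$ with $y/\phi < 1$, so
\[
I_0 = e^{3A} \int_0^1 y \, e^{-3By}\,(1+y^2 e^{2A-2By})^{-2}\,dy, \qquad
I_1 = e^{3A} \int_0^1 e^{-3By}\,(1+y^2 e^{2A-2By})^{-2}\,dy.
\]
The rescaling $u = 3By$ concentrates each integral at $y$ of order $B^{-1}$, where the correction factor $(1+y^2 e^{2A-2By})^{-2}$ tends to $1$ provided $e^{2A} = o(B^2)$, i.e., $A \leq (1-\delta)\log B$ for some $\delta>0$. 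In that regime, Laplace asymptotics give
\[
I_0 = \frac{e^{3A}}{9B^2}(1+o(1)), \qquad I_1 = \frac{e^{3A}}{3B}(1+o(1)).
\]

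Once these are in hand, the stated asymptotics are immediate. Dividing, $dA/dB = KI_0/I_1 = K/(3B)\cdot (1+o(1))$, which integrates to $A(t) = (K/3)\log B(t) + O(1)$, i.e., $e^{A(t)} \asymp B(t)^{K/3}$. Substituting back, $\dot B = K^{-1} I_1 \asymp c\, B^{K-1}$ for some $c>0$, so $B(t)^{2-K} \asymp t$, and hence $B(t) \asymp t^{1/(2-K)}$ as long as $K<2$, which is well inside the hypothesized range. Since $A$ is logarithmic in $B$ and $B$ grows only polynomially, no finite-time blowup is possible, giving global regularity.

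The main obstacle is closing the continuity argument that sustains the bootstrap hypothesis $A(t) \leq (1-\delta)\log B(t)$. The Laplace estimates above require this hypothesis, and their conclusion $dA/dB \sim K/(3B)$ propagates it self-consistently only when $K/3 < 1-\delta$. In addition, the initial transient from $A=B=0$ (where, say, $\dot A(0) = 1/4$ and $\dot B(0) = K^{-1}(\pi/8 + 1/4)$ are explicit) must be handled separately to verify that the trajectory enters the basin where the asymptotic estimates become valid. Balancing the explicit constants appearing in the Laplace approximations against the margin needed to propagate the bootstrap is what forces the numerical restriction $K \leq 1.3$ in the theorem, and constitutes the heart of the technical work.
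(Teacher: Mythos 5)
Your outline follows the same route as the paper: reduction to the planar system \eqref{ABequation}, a bootstrap of the form $A\lesssim\log B$ guaranteeing that $y$ is dominated by $e^{-G(y,t)}$ so that both integrals are comparable to $\int_0^1 y\,e^{3G}dy$ and $\int_0^1 e^{3G}dy$, and then the asymptotics $e^A\approx B^{K/3}$, $B\approx t^{1/(2-K)}$. The difficulty is that your write-up stops exactly where the proof has to be carried out, and the two items you defer are not routine verifications.

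First, your bootstrap hypothesis $A\le (1-\delta)\log B$ cannot even be initialized: for small $t$ one has $A(t)\approx t/4>0$ while $B(t)<1$, so the hypothesis is false on a whole initial and intermediate time range, and on that range nothing in your argument controls the solution. This regime is genuinely dangerous: the only free a priori relation is $B'\ge A'/K$ (since $y\le 1$), giving $B\ge A/K$, and with only that information the integrand of $A'$ near the crossover point $y\sim e^{-G(y)}$ still allows $A'$ of order $e^{A}$, so finite-time blow-up is not excluded. The paper spends all of Section 2.2 here: it propagates $y<e^{-G(y,t)}$ together with $A<\frac{K}{2}B$ via a symmetrization argument ($A'-\frac{K}{2}B'=\int_0^1\left(y-\frac12\right)F\,dy<0$ because the integrand $F$ is monotone decreasing in $y$ under the profile bound), checks the minimizer of $e^{-G(y)}-\frac{2}{K}y$ in two separate cases, and then proves (Lemma~\ref{Lem:end_of_the_initial_layer}) that the trajectory reaches $A=1$, $B>2/K$ in finite time. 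Second, even after this transition your ``$o(1)$'' Laplace errors are not small when $B$ is only of size $2/K$, so the asymptotic forms of $I_0,I_1$ cannot be invoked to close a continuity argument; one needs explicit constants valid for all $B\ge 2/K$ and a strict differential inequality at a putative first violation time. That is the content of Lemma~\ref{lem:3_estimates}: propagating the two inequalities \eqref{3_estimates} (equivalently the pointwise bounds $y\le\frac{K}{2}e^{-G}$ and $y\le e^{-kG}$) by verifying $kA'<(\log B)'$ through the explicit constant $\frac{48(1-e^{-6/K})}{K(K^2+4)^2}>1$, and this computation is precisely where the hypothesis $K\le 1.3$ enters. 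You correctly identify this constant-balancing and the initial transient as ``the heart of the technical work,'' but since they are exactly the parts not supplied, what you have is an accurate description of the mechanism rather than a proof.
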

\begin{rmk}
1. The $K=1$ case corresponds to the original Biot-Savart law in the Boussinesq equation \eqref{Boussinesq}. We extend the range of $K$ upward to study the stability of the suppression effect under perturbation of the equation. \\
2. The following notational convention will be used in the paper. Assume that $A,B$ are two positive quantities. We use the notation $A\lesssim B$ or $A\gtrsim B$ if there exists a universal constant $C$ such that $A\leq CB$ or $A\geq \frac{1}{C}B$, respectively. The notation $A\approx B$ is applied if there exists a universal constant $C$ such that $\frac{1}{C}B\leq A\leq CB$.
\end{rmk}
The picture described by Theorem~\ref{Thm_1} is that of a filament of vorticity reaching towards the origin - if it ever arrives there, this would signify blow up.
But the filament gets thinned by shear,
which is reflected in growth of $B(t),$ and due to this fact it only gets to the origin in an infinite time and not at a particularly high power rate in time.

The second model that we consider here can be thought of as a variation of the original Hou-Luo model described already in \cite{HouLuo14} and shown to lead to singularity
formation in \cite{CHKLVY17}. The Hou-Luo model is derived under the assumption that vorticity remains constant in a thin boundary layer and zero elsewhere:
$\omega(x_1,x_2,t) = \omega(x_1,0,t) \chi_{[0,a]}(x_2),$ where $\chi_S$ is the characteristic function of the set $S.$ Assuming such ansatz one can derive
an effective Biot-Savart law that reduces the system \eqref{Boussinesq} to one dimension (essentially, to the boundary $x_2=0$). The assumption of vorticity that is
constant in a boundary layer, while a reasonable initial model, does not really fit with the numerical simulations \cite{HouLuo14}, and misses the effect of the
shear near the boundary that is likely crucial to understand in order to analyze the problem rigorously. We introduce a ``next order" model where we discard
$u_2$ component of the velocity, but do keep two dimensions and retain dependence of the $u_1$ component on $x_2,$ though restricting it to be linear.
The model system that we analyze takes form
\begin{subequations}\label{Higher_order_model}
\begin{align}
\pa_t \omega+u_1\pa_{x_1}\omega&=\frac{\rho}{x_1},\label{vorticity_equation}\\
\pa_t \rho+u_1\pa_{x_1}\rho&=0,\label{mass_transport_eq}\\
u_1(x_1,x_2,t)&=-x_1\iint_{[0,\infty)^2}\frac{y_1y_2\omega(y)}{|y|^4}dy+x_2\omega(x_1,0,t).\label{bs2mod}
\end{align}
\end{subequations}
Here we replace the derivative $\partial_{x_1}\rho$ with the mean field approximation, similarly to \cite{HORY,KT1}. In the Biot-Savart law \eqref{bs2mod},
the first term on the right hand side gives the approximate value of $u(x_1,0,t):$ here we use the explicit but simplified form of the full Biot-Savart law,
in the spirit of \cite{KS14,CKY15}. The second term on the right hand side of \eqref{bs2mod} takes advantage of a simple observation. Namely, on the boundary
$x_2=0$ we have $u_2(x_1,0,t)=0$ due to the no-penetration boundary condition. Hence also $\partial_{x_1}u_2(x_1,0,t)=0,$ and therefore
$\partial_{x_2} u_1(x_1,0,t) = \omega(x_1,0,t)$ for all $x_1,t.$ Thus the equation \eqref{bs2mod} becomes a linear in $x_2$ approximation
of $u_1$ near boundary. This is a natural first next order model that can be used to gain insight into the effect of the shear near boundary on singularity formation
in the Hou-Luo scenario.

The main results we obtain concerning the model \eqref{Higher_order_model} are summarized in the following theorem.
\begin{thm}\label{Thm2}
Consider the model \eqref{Higher_order_model}. There exist smooth initial data $(\omega_0,\rho_0)$ such that the solution blows up in finite time.
\end{thm}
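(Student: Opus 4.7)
The strategy is to adapt the one-dimensional blow-up argument of \cite{CHKLVY17}, exploiting two structural features of \eqref{Higher_order_model}: since $u_2 \equiv 0$, the coordinate $x_2$ is conserved along trajectories and the problem foliates into a one-parameter family of 1D transport equations in $x_1$; and on the boundary $x_2 = 0$ the shear term vanishes, so $u_1(x_1,0,t) = -x_1 J(t)$ is a pure spatially-uniform contraction with rate $J(t) := \iint_{[0,\infty)^2} y_1 y_2\, \omega(y,t)/|y|^4\,dy \geq 0$. I take smooth, compactly supported initial data with $\omega_0$ odd in $x_1$ and $\omega_0 \geq 0$ on the first quadrant, and with $\rho_0$ even in $x_1$, $\rho_0 \geq 0$, $\rho_0(0,x_2) \equiv 0$ (for instance $\rho_0 \sim x_1^2$ near $\{x_1 = 0\}$) so that the formal $\rho/x_1$ singularity is controlled. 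These symmetries, the positivity, and the vanishing of $\rho$ on $\{x_1 = 0\}$ are all preserved by the flow, so $J(t) \geq 0$ throughout.

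Setting $I(t) := \int_0^t J(s)\,ds$, the boundary characteristic is $X_0(t,a) = a e^{-I(t)}$, and integrating \eqref{vorticity_equation} along it yields the closed-form expression
\begin{equation*}
W(x_1,t) := \omega(x_1,0,t) = \omega_0(x_1 e^{I(t)},0) + \frac{\rho_0(x_1 e^{I(t)},0)}{x_1 e^{I(t)}} \int_0^t e^{I(s)}\,ds,
\end{equation*}
so that $W$ grows along boundary characteristics at a rate controlled by $I(t)$. The aim is now to obtain a super-linear differential inequality $\dot I = J \geq \Phi(I)$ forcing $I(t) \to \infty$ in finite time, which in turn implies that $\|\omega(\cdot,t)\|_{L^\infty}$ blows up. To derive such an inequality one inserts a bulk analogue of the above formula into $J(t)$ and estimates the contribution from a thin slab $\{y_2 \in [0,h]\}$: on the slab the bulk characteristic $\dot X_{x_2} = -X_{x_2} J(t) + x_2 W(X_{x_2},t)$ stays close to $X_0$ when $h$ is small, giving $\omega(y_1,y_2,t) \gtrsim W(y_1,t)$ for $y_2 \in [0,h]$, which, combined with the explicit form of $W$, should feed back on $J$ at a rate faster than linear in $I$.

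The principal obstacle is quantifying the shear correction $x_2 W$ in the bulk. Because $W \geq 0$, this term opposes the contraction $-x_1 J$, and once $W$ becomes large it eventually dominates and pushes bulk characteristics outward rather than toward the origin, so vorticity may fail to accumulate off the boundary. To keep the lower bound on $J$ alive one must either choose a time-dependent slab height $h = h(t)$ shrinking at a rate matched to the growth of $W$, or include nontrivial bulk support in $\omega_0$ so that $J(0) > 0$ provides an unconditional baseline which, through the boundary formula, amplifies $W$ and drives a feedback loop between $J$ and $I$. Calibrating the initial data, the slab, and an appropriate Lyapunov functional of $\rho$ or $\omega$ so as to close this feedback loop within finite time is the main analytical hurdle in proving Theorem~\ref{Thm2}.
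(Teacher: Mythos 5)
Your setup is the right one and matches the paper's: the foliation in $x_2$, the pure contraction $u_1(x_1,0,t)=-x_1J(t)$ on the boundary, and the explicit boundary vorticity formula obtained by integrating along $X_0(t,a)=ae^{-I(t)}$ are all exactly as in Section 3 (the paper takes $\omega_0\equiv 0$ and $\rho_0$ supported in a band $\{x_1\in[\delta/2,(L+1)\delta]\}$ away from $x_1=0$ rather than vanishing quadratically there, but that is cosmetic). The genuine gap is that you stop precisely where the proof has to start: you never derive the super-linear differential inequality, and you explicitly defer ``calibrating the initial data, the slab, and an appropriate Lyapunov functional'' as ``the main analytical hurdle.'' That hurdle is the theorem.

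The two ideas you are missing are these. First, the shear term is not a perturbation to be beaten by making the slab thin; it is exactly integrable. With $D(t)=e^{\int_0^t J}$, $Q(t)=\int_0^t D$, $H=Q^2$, pass to the back-to-label variable $U(t)=D(t)\Phi_t^1(x_1^0,x_2)$ (the boundary preimage of the current position). The bulk trajectory equation then becomes $\frac{d}{dt}U=\frac{x_2}{2}\frac{\rho_0(U,0)}{U}\frac{d}{dt}H(t)$, which on the plateau where $\rho_0(\cdot,0)/x_1\equiv 1/\delta$ integrates to $U(t)=x_1^0+\frac{x_2}{2\delta}H(t)$; the vorticity along such a trajectory is then an explicit $\arctan$ expression, bounded below by $\frac{\pi}{4}\delta^{-1}\int_0^t D$ on the time-dependent set of labels $y_2\le \delta^2L/H(t)$, $\max\{\delta,\frac12 Y_2\delta\}\le y_1^0\le L\delta/2$. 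So the shrinking slab you guessed at is correct, but it comes from an exact formula, not from a closeness-to-$X_0$ estimate. Second, to turn this vorticity lower bound into a lower bound on $J$ you must know where these trajectories currently sit, i.e., control the competition between the contraction $D(t)^{-1}$ and the outward shear displacement $\frac{y_2}{2\delta}H(t)D(t)^{-1}$. The paper does this with a bootstrap on $E(t):=\delta^{-1}D(t)^{-1}H(t)$: choosing $L>5e^{96/\pi}$, one shows $E\le 2$ for all time by a contradiction argument in which the vorticity lower bound gives $J(t_1)\gtrsim \delta^{-1}\log(L/5)\int_0^{t_1}D$ at a putative first time $E=2$. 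The bound $E\le 2$ simultaneously keeps the displaced trajectories at distance comparable to $\delta D(t)^{-1}$ from the origin, where they feed $J$ with a $\log L$ gain, and is itself the blow-up inequality, since it reads $Q'(t)=D(t)\ge\frac{1}{2\delta}Q(t)^2$, forcing $Q$ and hence $\|\omega(\cdot,t)\|_{L^\infty}$ to blow up in finite time. Your worry that the shear ``pushes bulk characteristics outward so vorticity may fail to accumulate'' is resolved only by this quantitative coupling, not by shrinking $h$ alone; without it your outline does not close.
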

\begin{rmk}
A similar result holds for the model with the original forcing term $\partial_{x_1} \rho.$
However, in this case one has to control the contribution of the negative vorticity. Due to the
approximation \eqref{bs2mod}, and specifically the global nature of the first term, this control is rather
artificially technical. Other than that, the treatment of the model involving the original forcing term
is quite close to the mean field one considered here.
\end{rmk}

The remaining part of the paper is organized as follows: in Section 2, we motivate and prove the well-posedness problem of the nonlocal ODE model \eqref{non-local-ODE}; in Section 3, we show
finite time blow up for \eqref{Higher_order_model}. 

\section{The ODE Model}
\subsection{Derivation and idea of the proof}\label{nlodederiv}
In this section, we explain the origin of the ODE model \eqref{non-local-ODE} and outline the proof of its global well-posedness.
To derive the model \eqref{non-local-ODE}, we will make several simplifications, including some basic assumptions on the initial data. Assume that the initial vorticity $\omega_0$ is odd
(in fact, we will take $\omega_0(x)=0$ for simplicity) and density $\rho_0$ is even in $x_1.$  These symmetries are preserved by the Boussinesq dynamics \eqref{Boussinesq}. Under this assumption, the Biot-Savart law, which determines velocity in terms of the vorticity, can be explicitly written as follows:
\begin{align}
u(x_1,x_2)=\frac{(\pa_{x_2},-\pa_{x_1})}{2\pi}\int_0^\infty dy_1\int_0^\infty dy_2(\log|x-y|-\log|x-\overline{y}|-\log |x-\widetilde{y}|+\log|x+y|)\omega(y),
\end{align}
where $\overline{y}=(y_1,-y_2)$, $\widetilde{y}=(-y_1,y_2)$ denote the reflections of a point $y$ with respect to the axes. We expect that singularity formation in
solutions of the equation \eqref{Boussinesq} is induced by the compression effect in the horizontal direction, so we simplify the model \eqref{Boussinesq} by setting the vertical component of the velocity to be zero, i.e., $u_2\equiv 0$.
Even though this violates the divergence free condition on the velocity, the created non-zero divergence effectively maintains the role of the vertical velocity
component $u_2.$ The horizontal component $u_1$ can be expressed as follows:
\begin{align}
u_1(x_1,x_2)=&\frac{1}{2\pi}\int_0^\infty \int_0^\infty dy\left(\frac{x_2-y_2}{|x-y|^2}-\frac{x_2+y_2}{|x-\overline{y}|^2}-\frac{x_2-y_2}{|x-\widetilde{y}|^2}+\frac{x_2+y_2}{|x+y|^2}\right)\omega(y)\nonumber\\
=&\frac{1}{2\pi}\int_0^\infty\int_0^\infty dy\bigg(\frac{4x_1y_1(x_2-y_2)\omega(y)}{((x_1-y_1)^2+(x_2-y_2)^2)((x_1+y_1)^2+(x_2-y_2)^2)}\nonumber\\
&-\frac{4x_1y_1(x_2+y_2)\omega(y)}{((x_1-y_1)^2+(x_2+y_2)^2)((x_1+y_1)^2+(x_2+y_2)^2)}\bigg)\nonumber\\
=&: -x_1 I(x,t).\label{defn_I}
\end{align}
Later we will choose the initial data such that the vorticity will be supported in a compact set close to but away from  the origin, and it will be safe to replace the integral domain to $Q=[0,1]^2$. Next we simplify the model \eqref{Boussinesq} further by changing the Buoyancy term from $\pa_{x_1}\rho$ to the mean field version $\frac{\rho}{x_1}$, i.e.,
the first equation in \eqref{Boussinesq} becomes
\begin{align}
\pa_t \omega+(u\cdot \na)\omega=\frac{\rho}{x_1}.\label{Model:Vorticity_eq}
\end{align}
The intuition behind this change is that since the vorticity is assumed to be supported away from the $x_2$ axis, the $x_1$-derivative can be  approximated by the difference quotient $\frac{\rho}{x_1}$ \cite{HORY,KT1}. With this modification, the vorticity in the first quadrant will stay positive under dynamics \eqref{Model:Vorticity_eq}.

Now let us track the evolution of the vorticity in the model \eqref{defn_I}, \eqref{Model:Vorticity_eq}. To this end, we first calculate the flow map $\Phi_t(x)$ associated to \eqref{defn_I} whose time evolution is defined as follows
\begin{align*}
\pa_t \Phi_t(x_0)=u(\Phi_t(x_0),t)=(-\Phi_t^{(1)}(x_0)I(\Phi_t(x_0),t),0),\quad \Phi_{t=0}(x_0)=x_0=(x_{1;0},x_{2;0}).
\end{align*}
Integration in time yields the exact form of the flow map
\begin{align}
\Phi_t(x_0)=\left(x_{1;0}\exp\left\{-\int_0^tI(\Phi_s(x_0),s)ds\right\},x_{2,0}\right)=:(x_{1;0}\exp{\{-G(x_0,t)\}},x_{2,0}).\label{Flow_map_0}
\end{align}
Later we call the function $G$ the profile of the solution since $e^{-G}$ tracks the position of the front of the vorticity. For the sake of simplicity, we abuse notation and use $\Phi_t(x)$ to denote the first component of the flow map $\Phi_t^{(1)}(x)$. Let $x$ denote the current location of the flow map \eqref{Flow_map_0}, i.e., $x:=\Phi_t(x_{1;0})$ and $x_{1;0}=\Phi_t^{-1}(x)$. Rewriting the equation \eqref{Model:Vorticity_eq} in the Lagrangian coordinate \eqref{Flow_map_0} and using the fact that the density is transported by the flow, i.e., $\rho(\Phi_t(x_0),t)\equiv \rho_0(x_0)$, we obtain
\begin{align}
\frac{d}{dt}\omega(\Phi_t(x_0),t)=\frac{\rho(\Phi_t(x_0),t)}{\Phi_t(x_0)}=\frac{\rho_0(x_0)}{x_{1;0}}\exp\left
\{\int_0^tI(\Phi_s(x_0),s)ds\right\}.\label{vorticity_1}
\end{align}
Integrating the equation \eqref{vorticity_1} and applying the inverse flow map the expression  \eqref{Flow_map_0} yields
\begin{align}
\omega(x,t)=&\omega_0(\Phi_t^{-1}(x))+\frac{\rho_0(\Phi_t^{-1}(x))}{\Phi_t^{-1}(x)}\int_0^t \exp\left\{\int_0^s I(\Phi_r(\Phi^{-1}_t(x)),r)dr\right\}ds\nonumber\\
=&\omega_0(\Phi_t^{-1}(x))+\frac{\rho_0(\Phi_t^{-1}(x))}{x_1}\int_0^t \exp\left\{-\int_0^tI(\Phi_r(\Phi_t^{-1}(x)),r)dr+\int_0^s I(\Phi_r(\Phi_t^{-1}(x)),r)dr\right\}ds\nonumber\\
=&\omega_0(\Phi_t^{-1}(x))+\frac{\rho_0(\Phi_t^{-1}(x))}{x_1}\int_0^t \exp\left\{-\int_s^t I(\Phi_r(\Phi_{t}^{-1}(x)),r)dr\right\}ds.\label{Vorticity_expression}
\end{align}
To clarify the possible blow-up suppression mechanism of the model, we make further simplifications. Since the last term under the integral in the final form of the
Biot-Savart law \eqref{defn_I} is always negative, its net effect is to enhance the drive towards the origin and therefore blow-up. We drop this last term in \eqref{defn_I}. Moreover, following the
intuition of \cite{KS14} we drop the $x_1$ variable in the denominator of the Biot-Savart law. 
Furthermore, we set the initial vorticity term in \eqref{Vorticity_expression} to zero as it is not essential for the phenomena that we would like to study. Substituting \eqref{Vorticity_expression} into the Biot-Savart law \eqref{defn_I} and using all the simplifications we mentioned above yields the following velocity law:
\begin{align}
u_1(x_1,x_2,t)
=-x_1\int_0^1\int_0^1\frac{(y_2-x_2)\rho_0(y_1\exp\{\int_0^tI(y_2,s)ds\})}{(y_1^2+(x_2-y_2)^2)^2}\int_0^t \exp\left\{-\int_s^t I(y_2,r)dr\right\} ds dy_1 dy_2.\label{I_x2_t}
\end{align}
Note that in the simplified velocity law, the function $I(\cdot,t)$ no longer depends on the first variable:
\[ I(x_2,t) = \frac{1}{2\pi}\int_0^\infty\int_0^\infty  \frac{4y_1(x_2-y_2)\omega(y)}{(y_1^2+(x_2-y_2)^2)^2}\,dy_1dy_2. \]
Suppose that initially, for $0 \leq x_2 \leq 1,$  the density is close to the characteristic function of an interval $[a,b]$, i.e., $\rho_0(x) \sim \chi_{[a,b]}(x_1).$
Let us first integrate the $y_1$ variable in \eqref{I_x2_t} and use the definition of the profile in \eqref{Flow_map_0} to obtain
\begin{align*}
\int_{a\exp\{-\int_0^tI(y_2,s)ds\}}^{b\exp\{-\int_0^tI(y_2,s)ds\}}\frac{1}{((x_2-y_2)^2+y_1^2)^2}dy_1\sim_{a,b}\frac{e^{-G}}{((x_2-y_2)^2+e^{-2G})^2}.
\end{align*}
To get a model amenable to precise analysis, we will discard the nonlocal in time factor $\int_0^t \exp\left\{-\int_s^t I(y_2,r)dr\right\}ds$ from \eqref{I_x2_t} -
numerical simulations suggest that this factor does not play a crucial role.
Finally, we further simplify the model by dropping the $x_2$ variable in the denominator. We end up with the following self-contained equation
\begin{align}\label{non-local-ODE1}
\partial_t G(x_2,t)=I(x_2,t)=\int_0^1\frac{(y_2-x_2)e^{-G}}{(y_2^2+e^{-2G})^2}dy_2,\quad G(x_2,0)=0.
\end{align}
Since there is no $x_1$ variable in the $G$ equation \eqref{non-local-ODE1} anymore, we use $x$ to denote $x_2$ and end up with the ODE model \eqref{non-local-ODE} with $K=1$. Noting that the right hand side of the model \eqref{non-local-ODE1} is linear with respect to $x_2$, we define the ansatz \[G(x,t)=:A(t)-B(t)x,\] and rewrite the model as follows
\begin{align}
A'(t)=&\int_0^1\frac{y_2e^{-G}}{(y_2^2+e^{-2G})^2}dy_2;\\
B'(t)=&\int_0^1\frac{e^{-G}}{(y_2^2+e^{-2G})^2}dy_2.
\end{align}
Here the growth of function $A$ encodes the potential blow-up while the growth of function $B$ suppresses it by thinning the vorticity filament.
 To study the stability of the blow-up suppression mechanics, we put parameter $\frac{1}{K}$ in front of the time evolution of the $B$ quantity and end up with:
\begin{subequations}\label{ABequation}
\begin{align}
A'(t)=&\int_0^1\frac{ye^{-G(y,t)}}{(y^2+e^{-2G(y,t)})^2}dy,\label{A'(t)}\\
B'(t)=&\frac{1}{K}\int_0^1\frac{e^{-G(y,t)}}{(y^2+e^{-2G(y,t)})^2}dy; \,\,\,\,A(0)=B(0)=0. \label{B'(t)}
\end{align} \end{subequations}
Notice that increasing $K$ weakens the nonlinearity depletion effect provided by growth of $B.$
This is equivalent to the model \eqref{non-local-ODE}.

Here is an outline of our approach to analysis of \eqref{ABequation}.
We will consider two regimes in the phase space $(A,B)\in\{A\leq \frac{1}{K}B\}$:
\begin{align}
\text{a) Initial Configuration: }\mathcal{I}:=& \left\{0 \leq A\leq 1, B\geq \frac{2}{K}\right\} \cup \left\{0\leq B\leq \frac{2}{K}\right\};\label{Initial_configuration}\\
\text{b) Final Configuration: }\mathcal{F}:=&\left\{A>1,\,\,\,A\leq \frac{K}{2}B\right\}.\label{Final_configuration}
\end{align}
Since $A(0)=B(0)=0$, the point $(A,B)$ initiates in the regime $\mathcal{I}$. Our first goal will be to show that from regime $\mathcal{I},$
the solution will necessarily transition to the regime $\mathcal{F}.$
A key observation is that the minimum $\min\{y,e^{-G(y,t)}\}$ plays the key role in the magnitude of the growth of the solutions. As a result, to understand the long time behavior of the solutions, we will propagate the bound on the profile $G$:
\begin{align}\label{Core_est}
y\leq e^{-G(y,t)},\quad \forall t\geq 0,\ \forall y\in[0,1].
\end{align}
Once the profile estimate \eqref{Core_est} is established, the behavior of the denominator near potential singularity $y=0$ will be controlled,
and this will lead to sufficiently strong estimates on the solution to show global regularity.
In fact, we will need a similar but stronger than \eqref{Core_est} bound to establish the asymptotic behavior of the solution.

The remaining part of the section is organized as follows: in Section 2.2, we will propagate the profile bound \eqref{Core_est} under the initial configuration $\mathcal{I}$; in Section 2.3, we focus on the final configuration and prove the global well-posedness of the equation \eqref{ABequation} and the asymptotic behavior.
\subsection{Initial configuration $\mathcal{I}$}

In this section, we focus on the initial configuration
\begin{align}
(A,B)\in \mathcal{I}.
\end{align}
As we discussed in the last section, the goal is to propagate the bound \eqref{Core_est} while $(A,B)\in \mathcal{I}$, as well as show that
solution will have to transition into the $\mathcal{F}$ regime.
First of all, the local existence of solutions follows from the classical ODE theorems, as the right hand side of the system is Lipschitz
in $A$ and $B$ in the neighborhood of the initial data. The resulting local solution is smooth in $t.$

\begin{lem}
Suppose that $K$ in \eqref{B'(t)} satisfies $1 \leq K \leq 1.3$. Then for all times $t>0$ that the solution $(A,B)$ remains in $\mathcal{I}$, we have
\begin{align}
e^{-G(y,t)} > & y,\quad \forall y\in[0,1],\label{Core_est_1}\\
A(t)<&\frac{K}{2}B(t).\label{Core_est_2} 
\end{align}
\end{lem}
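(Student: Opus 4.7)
My plan is a continuity bootstrap on the functional $\phi(t) := \tfrac{K}{2} B(t) - A(t)$. The target \eqref{Core_est_2} reads $\phi > 0$, and I would first reduce \eqref{Core_est_1} to \eqref{Core_est_2} and then propagate $\phi > 0$ by an ODE argument.

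For the reduction, write $E(y,t) := e^{-G(y,t)} = e^{B(t) y - A(t)}$. The condition $E(y,t) > y$ on $(0, 1]$ is equivalent to $A(t) < B(t) y - \ln y$ for every such $y$, i.e.\ to
\[A(t) < \min_{y \in (0, 1]}\bigl(B(t) y - \ln y\bigr) = \begin{cases} 1 + \ln B(t), & B(t) \geq 1, \\ B(t), & 0 \leq B(t) \leq 1. \end{cases}\]
A short case analysis over the three pieces of $\mathcal{I}$ — $\{A \leq 1,\, B \geq 2/K\}$, $\{B < 1\}$, and $\{1 \leq B \leq 2/K\}$ — shows that $A < \tfrac{K}{2} B$ suffices in each. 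The only numerical input is $K/2 < 1$, which is where $K \leq 1.3$ enters (loosely), since this gives $\tfrac{K}{2} B < B$ in the first subcase of $B \leq 1$ and $\tfrac{K}{2} B \leq 1$ throughout $B \leq 2/K$. The trivial case $y = 0$ is handled by $E(0,t) = e^{-A(t)} > 0$.

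For the propagation, $\phi(0) = 0$, and a direct calculation of the initial integrals gives $A'(0) = \int_0^1 y(y^2+1)^{-2}\,dy = 1/4$ and $B'(0) = \tfrac{1}{K}(\pi/8 + 1/4)$, so $\phi'(0) = (\pi - 2)/16 > 0$ and $\phi > 0$ immediately after $t = 0$. Set $T^* := \sup\{T > 0 : \phi > 0 \text{ on } (0, T]\}$ and suppose for contradiction $T^* < T_{\mathcal{I}}$, the exit time from $\mathcal{I}$. Then $\phi(T^*) = 0$; nonetheless the reduction above still yields $E(y, T^*) > y$ on $(0, 1]$, since the three sub-case checks remain strict even with $A = \tfrac{K}{2} B$. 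I now compute
\[\phi'(T^*) = \int_0^1 \Bigl(\tfrac{1}{2} - y\Bigr) g(y)\,dy, \qquad g(y) := \frac{E(y, T^*)}{\bigl(y^2 + E(y, T^*)^2\bigr)^2},\]
and show $g$ is strictly decreasing on $[0, 1]$. Differentiating directly and using $E_y = B E$,
\[g'(y) = \frac{E\bigl[B y^2 - 3 B E^2 - 4 y\bigr]}{(y^2 + E^2)^3},\]
and $E \geq y$ forces the bracket to satisfy $B y^2 - 3 B E^2 - 4 y \leq -2 B y^2 - 4 y < 0$ for $y > 0$. Since $\int_0^1 (\tfrac{1}{2} - y)\,dy = 0$, rewriting $\phi'(T^*) = \int_0^1 (\tfrac{1}{2} - y)\bigl(g(y) - g(1/2)\bigr) dy$ makes the integrand pointwise non-negative and strictly positive off $y = 1/2$, so $\phi'(T^*) > 0$. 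But $\phi(T^*) = 0$ with $\phi > 0$ on $(0, T^*)$ forces $\phi'(T^*) \leq 0$, a contradiction. Hence $T^* \geq T_{\mathcal{I}}$, completing the argument.

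The most delicate part is the reduction's case analysis: one has to verify each sub-regime of $\mathcal{I}$ and confirm that the border-line values $A = 1$, $B = 2/K$, and $B = 1$ still yield strict inequality $A < \min_y(By - \ln y)$. The monotonicity of $g$ and the positivity of the weighted integral, though the heart of the propagation step, are essentially elementary once $E \geq y$ is in hand.
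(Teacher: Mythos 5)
Your proposal is correct and follows essentially the same route as the paper: a continuity bootstrap in which the sign of $\tfrac{K}{2}B-A$ is propagated via the monotonicity in $y$ of $F(y,t)=e^{-G}/(y^2+e^{-2G})^2$ together with the symmetrization $\int_0^1(y-\tfrac12)F\,dy<0$, while \eqref{Core_est_1} is recovered from $A\le\tfrac{K}{2}B$ and membership in $\mathcal{I}$ by a minimization over $y$ (your $\min_y(By-\ln y)$ is just the logarithm of the paper's minimization of $e^{-G}-\tfrac{2}{K}y$ at the same critical point). The only difference is bookkeeping: you run a single bootstrap quantity $\phi$ and treat \eqref{Core_est_1} as a static consequence, whereas the paper propagates the two inequalities jointly.
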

\begin{proof}
Let us first show that \eqref{Core_est_1} and \eqref{Core_est_2} hold on some small initial time interval. 
Indeed, direct computation shows that $A'(0)=\frac14$ and $B'(0) = \frac{1}{4K} + \frac{\pi}{8K}.$ Therefore
\[ \lim_{t \rightarrow 0} \frac{A(t)}{B(t)} = \frac{A'(0)}{B'(0)} = \frac{2K}{2+\pi}. \]
This directly implies \eqref{Core_est_2} for $t \in (0,\delta_0)$ for some small $\delta_0>0.$
Next, note that \eqref{Core_est_1} holds for $t=0,$ with strict inequality for all $0\leq y <1.$ On the other hand,
$\partial_t G(1,t) = A'(t)-B'(t) <0.$ By continuity, the last two observations imply that $e^{-G(y,t)} > y$ for all $y \in [0,1]$
and $t \in (0, \delta_0)$ for some $\delta_0>0.$

Now suppose that $t_0$ is the supremum of times $t$ such that \eqref{Core_est_1} and $A(s),B(s) \in \mathcal{I}$ holds for all $0 < s \leq t.$
First let us show that \eqref{Core_est_2} also holds for $0<t \leq t_0$ in this case.
Consider
\begin{align}
F(y,t):=\frac{e^{-G(y,t)}}{(y^2+e^{-2G(y,t)})^2}.
\end{align}
Recalling that $\partial_y G(y,t) = -B(t) <0$ for all $t >0$ and $y \in [0,1]$ and
taking spacial derivative yields that
\begin{align*}
\pa_y F(y,t) =&\frac{e^{-G(y,t)}\bigg(-\partial_y G(y,t)(y^2-3e^{-2G(y,t)})-4y\bigg)}{(y^2+e^{-2G(y,t)})^3}<0,\quad \forall y \in [0,1],
\end{align*}
provided that $e^{-G(y,t)} \geq y.$
As a result of the monotonicity of $F$ and a symmetrization argument, we have that
\begin{align*}
A'(t)-\frac{K}{2}B'(t)=&\int_0^1\left(y-\frac{1}{2}\right)F(y,t)dy<0, \quad \forall t_0 \geq  t> 0,
\end{align*}
which yields the estimate \eqref{Core_est_2}.

Now we aim to show that $e^{-G(y,t_0)} > y$ for all $y \in [0,1],$ hereby contradicting the definition of $t_0$ unless $A(t),B(t) \notin \mathcal{I}$
for $t > t_0$ (notice that since $A(t),B(t)$ are monotone increasing, once the trajectory leaves $\mathcal{I}$ it never comes back).
The inequality \eqref{Core_est_1} is true for every $y> A/B$, $ \forall t>0$ thanks to the following argument: 
\begin{align*}
e^{-G(y,t)}=e^{-A(t)+B(t)y}=e^{B(t)(-\frac{A(t)}{B(t)}+y)}> e^{0} \geq y,\quad y> \frac{A}{B}.
\end{align*}
Next we prove the strict inequality $ e^{-G(y)}> y$ for $y\leq A/B$. In fact, we will prove the following stronger version
\begin{align}\label{Short_time_step_3_1}
L(y,t):=e^{-G(y,t)}-\frac{2}{K}y\geq 0
\end{align}
(it is stronger except for $y=0$ where we have \eqref{Core_est_1} simply because $A(t) < \infty$ while we are in $\mathcal{I}$).
When proving the inequality \eqref{Short_time_step_3_1}, we distinguish between two cases - either $\{B\leq \frac{2}{K}\}$, or $\{A\leq 1, B\geq \frac{2}{K}\}$.
Direct computation of $\partial_y L(y,t)$ 
yields the minimum point $y_{min}$
\begin{align}\label{ymin}
 y_{min}=\frac{A}{B}+\frac{1}{B}\log \frac{2}{K B}.
\end{align}
If $B\leq \frac{2}{K}$, $y_{min}$ is always to the right of $\frac{A}{B}$. Therefore it is enough to check the inequality at the point $y=A/B$. Applying the assumption \eqref{Core_est_2}, which we know holds up to and including $t_0,$ we have that
\begin{align*}
e^{-A+B\frac{A}{B}}-\frac{2}{K}\frac{A}{B}\geq 0.
\end{align*}
This completes the proof in the case $B\leq\frac{2}{K}$.
Next, we show that inequality \eqref{Short_time_step_3_1} holds when $\{A\leq 1, B\geq \CI\}$.
Plugging \eqref{ymin} this into the function $L$ and applying the assumption \eqref{Core_est_2} and $\{A\leq 1, B\geq 2/K\}$ yields
\begin{align*}
e^{-A+B(\frac{A}{B}+\frac{1}{B}\log \frac{2}{KB})}&-\CI\left(\frac{A}{B}+\frac{1}{B}\log \frac{2}{K B}\right)
=\frac{2}{K B}-\frac{2A}{K B}-\frac{2}{K B}\log \frac{2}{K B}\geq \frac{2}{KB}(1-A)\geq 0.
\end{align*}
This concludes the proof of the lemma.
\end{proof}

To understand the long time behavior, we need to first show that the solution $(A,B)$ initiating from the initial regime $\mathcal{I}$ must end up in the final regime $\mathcal{F}$ in a finite time. The following lemma addresses this issue.

\begin{lem}\label{Lem:end_of_the_initial_layer}
Consider the solutions to the equation  \eqref{ABequation} subject to the initial configuration. Then the solutions must reach the state $A=1, B>\frac{2}{K}$ at some time $\infty> t_0>0$.
\end{lem}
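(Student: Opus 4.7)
The plan is to show the solution exits $\mathcal{I}$ in finite time through the face $\{A=1,\,B>2/K\}$. I will use two facts from the previous lemma throughout: the profile bound $e^{-G(y,t)}>y$ on $[0,1]$, which gives the pointwise inequality $e^{3G}/4\leq e^{-G}/(y^2+e^{-2G})^2\leq e^{3G}$; and the strict inequality $A(t)<(K/2)B(t)$, which automatically yields $B(t_0)>2/K$ once we exhibit $t_0$ with $A(t_0)=1$. Both $A$ and $B$ are strictly increasing (the integrands in \eqref{ABequation} are strictly positive on $(0,1]$), so the trajectory cannot re-enter $\mathcal{I}$.

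I would handle the sub-region $B\leq 2/K$ first. Since $A\leq (K/2)B\leq 1$ and $By\leq 2/K\leq 2$ for $K\in[1,1.3]$, we have $G(y,t)\geq -2$ on $[0,1]$, so
\[
B'(t)\geq \frac{1}{4K}\int_0^1 e^{3G(y,t)}\,dy\geq \frac{e^{-6}}{4K}.
\]
Thus $B$ reaches $2/K$ at some finite time $t_1$, and by the strict inequality from the previous lemma, $A(t_1)<1$.

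The main step is the sub-region $A\leq 1$, $B\geq 2/K>1$. Using $A\leq 1$ in the upper bound,
\[
B'(t)\leq \frac{1}{K}\int_0^1 e^{3G}\,dy \leq \frac{e^{3A}(1-e^{-3B})}{3KB}\leq \frac{e^3}{3KB(t)},
\]
so $B(t)^2\leq B(t_1)^2+(2e^3/3K)(t-t_1)$---an at-most-$\sqrt{t}$ growth that is the key quantitative input. For the reverse direction, using $A\geq 0$,
\[
A'(t)\geq \frac{1}{4}\int_0^1 y\,e^{3G(y,t)}\,dy \geq \frac{1}{4}\int_0^1 y\,e^{-3B(t)y}\,dy = \frac{1-(1+3B(t))e^{-3B(t)}}{36\,B(t)^2}.
\]
Since $B\geq 2/K>1$ and $(1+3B)e^{-3B}$ is decreasing on $[1,\infty)$ with value $4e^{-3}<1/2$ at $B=1$, the numerator exceeds $1/2$, giving $A'(t)\geq 1/(72\,B(t)^2)$. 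Combining with the upper bound on $B^2$ produces $A'(t)\geq c/(1+t-t_1)$ for some $c=c(K)>0$. This integrates to a divergent logarithm and forces $A(t)=1$ at some finite $t_0$; the strict inequality $A(t_0)<(K/2)B(t_0)$ then gives $B(t_0)>2/K$.

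The main potential obstacle is the tension between the growth of $B$ and the growth of $A$: if $B$ grew faster than $\sqrt{t}$, the lower bound $1/B^2$ on $A'$ would be integrable in time and $A$ could in principle stabilize below $1$. The $\sqrt{t}$ ceiling on $B$ is bought precisely by the cap $A\leq 1$ in the upper bound on $B'$, so the bookkeeping is self-consistent. The range $K\in[1,1.3]$ enters only through the crude inequality $2/K>1$ used to extract a fixed positive constant from $1-(1+3B)e^{-3B}$.
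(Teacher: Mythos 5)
Your proof is correct and rests on the same engine as the paper's: the profile bound $e^{-G(y,t)}>y$ from the preceding lemma, which pinches the integrand between $e^{3G}/4$ and $e^{3G}$, plus the inequality $A<\tfrac{K}{2}B$, which converts $A(t_0)=1$ into $B(t_0)>2/K$. The bookkeeping, however, is organized in a mirror-image way. The paper works uniformly on $\mathcal{I}$ for $t\geq\delta$, obtains $A'\gtrsim e^{3A}/B^2$ together with the two-sided bound $c_1e^{3A}/B\leq B'\leq e^{3A}/(3B)$, and chains these into $A'\gtrsim (\log B)'$ and $(B^2)'\gtrsim 1$, whence $A\gtrsim \log t$. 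You instead split $\mathcal{I}$ into $\{B\leq 2/K\}$, where the crude bound $B'\geq e^{-6}/(4K)$ pushes $B$ up to $2/K$ in finite time, and $\{A\leq 1,\ B\geq 2/K\}$, where you use only the \emph{upper} bound $B'\leq e^{3}/(3KB)$ to cap $B^2$ by a linear function of $t$ and then integrate $A'\geq 1/(72B^2)\gtrsim 1/t$ to a divergent logarithm. Your region splitting replaces the paper's $c_1(\delta)$ device (needed because the factor $1-3Be^{-3B}-e^{-3B}$ vanishes at $B=0$) with the cleaner observation that $B\geq 2/K>1$ makes that factor at least $1/2$; the price is that you must invoke $A\leq 1$ to bound $e^{3A}$ in the second region, whereas the paper's argument does not need the split. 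Both routes yield $A\gtrsim\log t$ and hence a finite exit time through the face $\{A=1,\,B>2/K\}$; all your elementary computations check out.
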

\begin{proof}
Since $A'(t)$ and $B'(t)>0$ for every $t \geq 0$ while the solutions are finite, 
the functions $A(t),B(t)$ are strictly positive and increasing for every $t>0.$ Therefore, for every $\delta>0,$
there exists a positive constant $c_1(\delta)>0$ such that the following inequalities are satisfied for $t \geq \delta$:
\begin{align*}
A'(t)\geq&\frac{1}{4}\int_0^1 ye^{3A-3By}dy=\frac{e^{3A}}{36B^2}(1-3Be^{-3B}-e^{-3B})\geq c_1\frac{e^{3A}}{B^2};\quad \frac{c_1e^{3A}}{B}\leq B'(t)\leq  \frac{e^{3A}}{3B}.
\end{align*}
Direct manipulation yields that
\begin{align*}
A'(t)\geq \frac{c_1}{2}(\log B^2)',\quad (B^2)'(t)\geq c_1e^{3A}/2\geq c_1/2.
\end{align*}
Therefore, we have that $A(t)\gtrsim \log t+C$, which in turn implies that $A(t)$ reaches $1$ at some time $t_0>0$. Since $A(0)=B(0)=0$ and by the previous lemma $A'(t)<\frac{K}{2}B'(t)$ for
all $t \leq t_0$, we have that if $A(t_0)=1$, then $B(t_0)>2/K$.
\end{proof}

\subsection{Final configuration $\mathcal{F}$}
In this section, we consider the long-time configuration, i.e.,
\begin{align}\label{long_time_configuration}
(A,B)\in \mathcal{F}.
\end{align}
Thanks to the Lemma \ref{Lem:end_of_the_initial_layer}, the solution $(A,B)$ of the model \eqref{ABequation} must end up in the final state $\mathcal{F}$. Therefore, it is enough to consider the following equations:
\begin{align}
A'(t)=&\int_0^1\frac{ye^{-G(y,t)}}{(y^2+e^{-2G(y,t)})^2}dy,\nb\\
B'(t)=&\frac{1}{K}\int_0^1\frac{e^{-G(y,t)}}{(y^2+e^{-2G(y,t)})^2}dy,\nb\\
A(t_0)=&1,\quad B(t_0)> 2/K, \quad K \in [1,1.3].\label{EQ:A-B_long_time}
\end{align}
It is easy to see that the solutions to \eqref{EQ:A-B_long_time} always continue in $\mathcal{F}$. We prove the following lemma:
\begin{lem}\label{lem:3_estimates}
Let $t_0$ be as in \eqref{EQ:A-B_long_time}. Consider the equation \eqref{EQ:A-B_long_time}.
The following two inequalities hold for all $t \geq t_0:$
\begin{subequations}\label{3_estimates}
\begin{align}
1)\ 1&-A(t)+\log \frac{KB(t)}{2} \geq 0;\label{3_estimates_1}\\
2)\ 1&-kA(t)+\log (kB(t))\geq 0, \quad {\rm for} \,\,\, 1<k<\frac{48(1-e^{-6/K})}{K(K^2+4)^2}.\label{3_estimates_2}
\end{align}
\end{subequations}
\end{lem}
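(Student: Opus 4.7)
The plan is a bootstrap/barrier argument showing that both $\Phi_1(t):=1-A(t)+\log(KB(t)/2)$ and $\Phi_2(t):=1-kA(t)+\log(kB(t))$ are non-decreasing in $t$, which combined with their nonnegativity at $t_0$ delivers the two inequalities. At $t_0$, $\Phi_1(t_0)=\log(KB(t_0)/2)>0$ from $B(t_0)>2/K$, and $\Phi_2(t_0)>1-k+\log(2k/K)\ge 0$ for $k$ in the admissible range (I would verify $2k/K\geq e^{k-1}$ as a one-variable check; at the upper endpoint of the $k$ range it comfortably holds for both $K=1$ and $K=1.3$). Computing
\[
\Phi_j'(t)=-c_jA'(t)+\frac{B'(t)}{B(t)}=\int_0^1\Bigl(\frac{1}{KB(t)}-c_jy\Bigr)F(y,t)\,dy,\quad c_1=1,\ c_2=k,
\]
where $F(y,t):=e^{-G(y,t)}/(y^2+e^{-2G(y,t)})^2$, the task reduces to proving $c_jA'(t)\leq B'(t)/B(t)$ along the flow.

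The core tool I would use is a two-sided sandwich on $F$. The trivial estimate $y^2+e^{-2G}\geq e^{-2G}$ gives $F\leq e^{3G}$ unconditionally. Under the mild hypothesis $(*)$: $y\leq e^{-G(y,t)}$ on $[0,1]$, one has $y^2+e^{-2G}\leq 2e^{-2G}$, hence $F\geq e^{3G}/4$. Under the sharper hypothesis $e^{-G(y,t)}\geq (2/K)y$, which is exactly (1), the same calculation gives $F\geq 16/(K^2+4)^2\cdot e^{3G}$. These sandwiches are the source of all quantitative bounds.

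For (1) I would argue by bootstrap. Since $K<2$, $B(t_0)>2/K>1$ forces $A(t_0)=1<1+\log B(t_0)$, so $(*)$ holds strictly at $t_0$. Suppose $(*)$ on $[t_0,T)$. Setting $c=1/(KB)$ and splitting $\int_0^1(c-y)F\,dy$ at $y=c$, use $F\geq e^{3G}/4$ on $[0,c]$ (where the weight is positive) and $F\leq e^{3G}$ on $[c,1]$ (where the weight is negative) to obtain
\[
\frac{B'(t)}{B(t)}-A'(t)\geq \frac{I_2}{4}-I_3,\quad I_2:=\int_0^c(c-y)e^{3G}\,dy,\ I_3:=\int_c^1(y-c)e^{3G}\,dy.
\]
Explicit integration gives $I_2=e^{3A}[(3-K)+Ke^{-3/K}]/(9KB^2)$ and, since the residual $e^{-3B}$ term in $I_3$ has negative coefficient for $B>1/K$, $I_3\leq e^{3A}e^{-3/K}/(9B^2)$. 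Hence $I_2\geq 4I_3$ reduces to the elementary inequality $3-K\geq 3Ke^{-3/K}$, which holds on $K\in[1,1.3]$ (LHS $\geq 1.7$, RHS $\leq 0.39$). Thus $A'(t)\leq B'(t)/B(t)$ on $[t_0,T)$, so $\log B(t)-A(t)$ is non-decreasing. This simultaneously delivers (1) via $\log B(t)-A(t)\geq \log B(t_0)-1>\log(2/K)-1$ and propagates $(*)$ strictly, closing the bootstrap.

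For (2), with (1) in hand, integrate $F\geq 16/(K^2+4)^2\cdot e^{3G}$ using $\int_0^1 e^{-3By}\,dy=(1-e^{-3B})/(3B)$ and $B(t)\geq 2/K$ (so $1-e^{-3B}\geq 1-e^{-6/K}$) to obtain
\[
\frac{B'(t)}{B(t)}\geq\frac{16(1-e^{-6/K})}{3K(K^2+4)^2}\cdot\frac{e^{3A(t)}}{B(t)^2},
\]
while $F\leq e^{3G}$ and $\int_0^1 ye^{-3By}\,dy\leq 1/(9B^2)$ give $A'(t)\leq e^{3A(t)}/(9B(t)^2)$. Comparing, $kA'(t)\leq B'(t)/B(t)$ precisely when $k\leq 48(1-e^{-6/K})/(K(K^2+4)^2)$, i.e., the lemma's range; integrating then gives (2). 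I expect the main obstacle to be the quantitative comparison $I_2\geq 4I_3$ in (1), which reduces ultimately to the elementary one-variable inequality $3Ke^{-3/K}\leq 3-K$; all other steps are routine integrations and algebra.
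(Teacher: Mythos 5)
Your proposal is correct, and it verifies: the initial checks at $t_0$ hold (for $K\in[1,1.3]$ and $k<2$ one has $\log(2k/K)\geq k-1$), the sandwich bounds $e^{3G}/4\leq F\leq e^{3G}$ under $y\leq e^{-G}$ and $F\geq \tfrac{16}{(K^2+4)^2}e^{3G}$ under $y\leq\tfrac{K}{2}e^{-G}$ are right (the pointwise bounds follow from the stated inequalities by the usual minimization of $e^{-A+By}-cy$), and your integrals $I_2,I_3$ and the reduction of $I_2\geq 4I_3$ to $3-K\geq 3Ke^{-3/K}$ check out. The second half of your argument (inequality (2)) is essentially the paper's proof: the same upper bound $A'\leq e^{3A}/(9B^2)$, the same lower bound $B'\geq \tfrac{16(1-e^{-6/K})}{3K(K^2+4)^2}\tfrac{e^{3A}}{B}$ obtained from the profile bound $y\leq\tfrac{K}{2}e^{-G}$, and the same threshold $k<\tfrac{48(1-e^{-6/K})}{K(K^2+4)^2}$. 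Where you genuinely differ is in the treatment of (1) and the bootstrap architecture: the paper propagates \eqref{3_estimates_1} and \eqref{3_estimates_2} simultaneously in a single continuity argument, observing that since $k>1$ the single comparison $kA'<(\log B)'$ (proved under the bootstrap hypothesis) closes both at once; you instead decouple, first propagating only the weaker invariant $y\leq e^{-G}$ (equivalently $1-A+\log B\geq 0$) via a sign-splitting of $\int_0^1(\tfrac{1}{KB}-y)F\,dy$ at $y=1/(KB)$ — necessary because under the weaker invariant the crude global comparison of $A'$ with $(\log B)'$ would fail for $K\geq 1$ — and then deducing (2) from the already-established (1) with no further continuity argument. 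Your route costs a slightly more delicate cancellation estimate for (1) but buys a cleaner logical structure (the constraint on $k$ enters only in the final comparison, not in the propagation of (1)); the paper's route is shorter because one strict derivative bound handles both inequalities. Both are valid proofs of the lemma.
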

\begin{rmk}
If $K=1$, then $1<k<\frac{48(1-e^{-6})}{25}$. Moreover, for $K \leq 1.3$, a simple computation shows that the constraint on $k$ in \eqref{3_estimates} is non-vacuous.
\end{rmk}
\begin{rmk}
Explicit calculation yields that the parameter $k$ is always strictly less than $2$ for any $K\in[1,1.3]$.
\end{rmk}

Before proving Lemma~\ref{lem:3_estimates}, we make the following observation.

\begin{lem}
If the inequalities \eqref{3_estimates} are satisfied at some time $t$, then the following profile bounds hold for all $y \in [0,1]$:\begin{subequations}
\begin{align}
1)\ &y\leq \CC e^{-G(y,t)}; \label{bound_1}\\
2)\ & y\leq e^{-kG(y,t)}. \label{bound_2}
\end{align}
\end{subequations}
\end{lem}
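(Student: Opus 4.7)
The plan is to verify each profile bound directly using the explicit form $G(y,t) = A(t) - B(t)y$, which turns the desired inequalities into scalar inequalities of one real variable $y$. Both can be handled as one-variable concave optimization problems.

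For the first bound, observe that $y \leq \frac{K}{2}e^{-G(y,t)}$ is equivalent (for $y > 0$) to
\[
\phi(y) := A(t) - B(t)y + \log(2y/K) \leq 0.
\]
Since $\phi''(y) = -1/y^2 < 0$, the function $\phi$ is strictly concave on $(0,1]$ and attains its maximum at the interior critical point $y_\ast = 1/B(t)$, provided $y_\ast \in (0,1]$. Since we are in the regime $\mathcal{F}$ with $B(t) > 2/K \geq 2/1.3 > 1$, we indeed have $y_\ast < 1$, so the maximum is interior. A direct computation gives
\[
\phi(y_\ast) = A(t) - 1 - \log(KB(t)/2),
\]
and $\phi(y_\ast) \leq 0$ is exactly the hypothesis \eqref{3_estimates_1}. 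The bound at $y = 0$ is automatic since $G(0,t) = A(t) < \infty$ forces $e^{-G(0,t)} > 0$, so the inequality \eqref{bound_1} holds on all of $[0,1]$.

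The second bound $y \leq e^{-kG(y,t)}$ is handled in exactly the same way. Setting
\[
\psi(y) := kA(t) - kB(t)y + \log y,
\]
we want $\psi(y) \leq 0$ on $(0,1]$. Again $\psi$ is strictly concave, with critical point $y_\ast = 1/(kB(t))$. I need to check that this lies in $(0,1)$; this follows since $k > 1$ and $B(t) > 2/K > 1$, so $kB(t) > 1$. Evaluating gives
\[
\psi(y_\ast) = kA(t) - 1 - \log(kB(t)),
\]
and $\psi(y_\ast) \leq 0$ is precisely \eqref{3_estimates_2}, completing the proof of \eqref{bound_2}.

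There is no real obstacle here; the only subtlety is verifying that the critical points $1/B$ and $1/(kB)$ lie inside $(0,1)$ so that the maximum of each concave function is the interior value, but this is guaranteed by the definition of the regime $\mathcal{F}$ (which ensures $B > 2/K$) together with the constraint $1 < k < 2$ recorded in the second remark preceding the lemma.
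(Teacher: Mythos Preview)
Your proof is correct and follows essentially the same approach as the paper: reduce each bound to a one-variable optimization using $G(y,t)=A(t)-B(t)y$. The only cosmetic difference is that the paper minimizes the differences $\frac{K}{2}e^{-A+By}-y$ and $e^{-kA+kBy}-y$ directly (obtaining minimizers $y_{\min}=\frac{A}{B}+\frac{1}{B}\log\frac{2}{KB}$ and $y_{\min}=\frac{A}{B}+\frac{1}{kB}\log\frac{1}{kB}$), whereas you take logarithms first and maximize the resulting concave functions; both routes reduce to exactly the hypotheses \eqref{3_estimates_1} and \eqref{3_estimates_2}.
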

\begin{proof} First, we prove inequality  \eqref{bound_1}. Recalling that $G(y,t)=A(t)-B(t)y$, we need to show
\begin{align*}
\frac{K}{2} e^{-A+B y}- y\geq 0.
\end{align*}
Evaluating at the minimal point $y_{min}=\frac{A}{B}+\frac{1}{B}\log\frac{2}{K B}$, we have that the following estimate implies the inequality \eqref{bound_1}
\begin{align*}
\CC\frac{2}{K B}-\frac{A}{B}+\frac{1}{B}\log{\frac {KB}{2}}\geq 0.
\end{align*}
After simplification, this is the same as the inequality \eqref{3_estimates_1}. The inequality \eqref{bound_2} is equivalent to
\begin{align}
e^{-kA+kBy}-y\geq 0. \label{aux7301}
\end{align}
The expression in \eqref{aux7301} has minimizer $y_{min}=\frac{A}{B}+\frac{1}{kB}\log\frac{1}{kB}.$  
 If we substitute this value into the expression, we obtain
\begin{align*}
\frac{1}{kB}-\frac{A}{B}-\frac{1}{kB}\log \frac{1}{kB}\geq 0\Leftrightarrow 1-kA+\log (kB)\geq 0,
\end{align*}
which is the same as \eqref{3_estimates_2}. This concludes the proof of the lemma.
\end{proof}

\begin{proof}[Proof of Lemma \ref{lem:3_estimates}]Direct calculation yields that the inequalities \eqref{3_estimates} hold at the initial time $t_0$. To propagate the inequalities \eqref{3_estimates} for all time, it is enough to prove the following strict derivative upper bounds for all $t \geq t_0$
assuming the estimates \eqref{3_estimates} for  $\forall s\in[t_0, t]:$ 
\begin{subequations}
\begin{align}
A'(t)<&(\log B)'(t),\label{A'lnB'}\\
kA'(t)<&(\log B)'(t). 
\label{kA'lnB'}
\end{align}
\end{subequations}
Since $k>1,$ it suffices to prove the stronger bound \eqref{kA'lnB'}.

To prove the inequality \eqref{kA'lnB'}, we provide an upper bound of $A'$ and a lower bound of $B'$. The estimate of $A'$ is carried out as follows:
\begin{align}
A'(t)\leq&\int_0^1 ye^{3G(y,t)}dy=\int_0^1 y e^{3A-3By}dy
=\frac{e^{3A}}{9B^2}\left(1-3Be^{-3B}-e^{-3B}\right)\leq \frac{e^{3A}}{9B^2}. 
\label{A'_upper_estimate}
\end{align}

Applying the assumption that (\ref{3_estimates}) holds for all $s\in [t_0,t]$ (and so, in particular, for $s=t$), \eqref{bound_1}, and the initial data $A(t_0)=1$, $ B(t_0)>2/K$, we estimate $B'(t)$ as follows:
\begin{align}
B'(t)=\frac{1}{K}\int_{0}^1\frac{e^{-G(t)}}{(y^2+e^{-2G(t)})^2}dy\geq& \frac{1}{K}\int_{0}^1\frac{e^{-G}}{(\frac{K^2}{4}e^{-2G}+e^{-2G})^2}dy
\geq\frac{1}{K(K^2/4+1)^2}\frac{e^{3A}}{3B}(1-e^{-3B}).\label{Bt_lower_bound_median_time}
\end{align}
We summarize the bounds \eqref{A'_upper_estimate} and \eqref{Bt_lower_bound_median_time} as follows:
\begin{subequations}\begin{align}
A'(t)\leq &\frac{e^{3A}}{9B^2};\label{Pinching_of_A'(t)}\\
\frac{e^{3A}}{3B}\frac{1}{K(K^2/4+1)^2}& \left(1-e^{-6/K}\right)\leq B'(t).\label{Pinching_of_B'(t)}
\end{align}
\end{subequations}
Direct calculation yields that
\begin{align*}
A'(t)\leq \frac{e^{3A}}{9B^2}\leq (\log B)'\frac{K(K^2+4)^2}{48(1-e^{-6/K})}<(\log B)'
\end{align*}
for $1\leq K\leq 1.3$. Therefore the inequalities  \eqref{A'lnB'} and \eqref{kA'lnB'} hold true for $1<k<\frac{48(1-e^{-6/K})}{K(K^2+4)^2}$. This completes the proof of the lemma. 
\end{proof}

Now we are ready to prove Theorem~\ref{Thm_1}.
\begin{proof}[Proof of Theorem~\ref{Thm_1}]
Let us denote by $p(t)$ the point where the function $G(y,t)=A(t)-B(t)y$ achieves the value $A(t)/2$.
Note that $p(t)$ is unique and \eqref{3_estimates_1} implies that $0<p(t)<1.$
We estimate $B'(t)$ from below using \eqref{bound_2}:
\begin{align}
B'(t)=& \frac{1}{K}\left(\int_0^{p}+\int_{p}^1\right) \frac{e^{-G(y,t)}}{(y^2+e^{-2G(y,t)})^2}dy\geq  \frac{1}{K}\int_0^{p}\frac{e^{-G}}{(e^{-2kG}+e^{-2G})^2}dy+\frac{1}{K}\int_{p}^1 \frac{e^{-G}}{(y^2+e^{-2G})^2}dy \nb\\
=& \frac{1}{K}\int_0^{p}\frac{e^{3G}}{(e^{-2(k-1)G}+1)^2}dy+\frac{1}{K}\int_{p}^1 \frac{e^{-G}}{(y^2+e^{-2G})^2}dy\nb\\
\geq&{\frac{1}{K(1+e^{-(k-1)A})^2}}\int_0^{p} e^{3G}dy
\geq \frac{1}{K(1+e^{-(k-1)A})^2}\frac{e^{3A}}{3B}-\frac{1}{K(1+e^{-(k-1)A})^2} \frac{e^{3A/2}}{3B}.\label{B'(t)_lower_bound}
\end{align}
Applying a similar argument yields that
\begin{align}
A'(t)\geq &\int_0^{p} \frac{ye^{3G}}{(e^{-(k-1)A}+1)^2}dy
=\frac{e^{3A}}{(e^{-(k-1)A}+1)^2 (3B)^2}(1-3Bp e^{-3A/2}-e^{-3A/2}).\label{A'_lower_bound}
\end{align}
An upper bound for $B'$ is derived as follows:
\begin{align}
B'(t)=& \frac{1}{K}\int_0^{1}\frac{e^{-G(y,t)}}{(y^2+e^{-2G(y,t)})^2}dy
\leq \frac{1}{K}\int_0^{1}e^{3G}dy\leq\frac{e^{3A}}{3KB}.\label{B'(t)_upper_bound}
\end{align}
Summarizing the estimates \eqref{B'(t)_lower_bound}, \eqref{A'_lower_bound}, \eqref{B'(t)_upper_bound} as well as the estimates \eqref{Pinching_of_A'(t)},  \eqref{Pinching_of_B'(t)}, we have the bounds
\begin{subequations}\begin{align}
\frac{e^{3A}}{(e^{-(k-1)A}+1)^2 (3B)^2}(1-3Bp e^{-3A/2}-e^{-3A/2})& \leq A'(t)\leq \frac{e^{3A}}{9B^2};\label{Pinching_of_A'(t)_1}\\
\frac{1-e^{-3A/2}}{(1+e^{-(k-1)A})^2}\frac{e^{3A}}{3KB}
&\leq B'(t)\leq \frac{e^{3A}}{3KB}.\label{Pinching_of_B'(t)_1}
\end{align}
\end{subequations}
It follows that
\begin{align*}
\left(1-\frac{2e^{-(k-1)A}+e^{-2(k-1)A}+e^{-3A/2}}{(1+e^{-(k-1)A})^2}\right)A'(t)\leq  \frac{KB'}{3B},
\end{align*} which in turn yields that
\begin{align*}\left(A-\frac{4}{(k-1)(1+e^{-(k-1)A})}\right)'\leq  \frac{K(\log B)'}{3}.
\end{align*}
Direct integration yields that 
there exists a constant $C$ depending only on $K$ such that 
\begin{align*}
A\leq\frac{K}{3}\log B+C,
\end{align*}
leading to
\begin{align}\label{expA_upper_bound}
e^A\leq e^C B^{\frac{K}{3}}.
\end{align}
Combining it with the $B'$ estimate  \eqref{Pinching_of_B'(t)_1} yields
\begin{align*}
 B'(t)\leq \frac{B^{K}}{3KB}e^{2C},
\end{align*}
so $B(t)$ satisfies the upper bound
\begin{align}\label{upper_bound_of_B}
B(t)\lesssim t^{1/(2-K)}, \quad \forall t\geq 1. 
\end{align} Combining this and $A'(t)\leq K B'(t)$, we obtain that $A(t)$ cannot blow up in finite time. We conclude that the solution to the equation \eqref{EQ:A-B_long_time} is bounded for any finite time. This completes the proof of the global boundedness of the solution.

To derive the long time asymptotic behavior, we estimate the lower bound of $A$ and $B$. From \eqref{Pinching_of_A'(t)_1} and \eqref{Pinching_of_B'(t)_1}
we obtain that
\begin{align*}
A'(t)\geq \frac{e^{3A}}{(3B)^2(1+e^{-(k-1)A})^2}\left(1-\frac{3}{2}Ae^{-3A/2} -e^{-3A/2}\right)\geq \frac{K(\log B)'}{3}
\frac{1-\frac{3}{2}Ae^{-3A/2} -e^{-3A/2}}{(1+e^{-(k-1)A})^2}.
\end{align*}
A calculation shows that for all $A$ large enough (say $A\geq \frac{2}{k-1}$), we have
\begin{align*}
\left(A+\frac{1}{k-1}2\log (1-5e^{-(k-1)A})\right)'\geq\frac{K(\log B)'}{3}.
\end{align*}
Here the the fact that $k\in[1,2)$ \eqref{3_estimates_2} is used.
Therefore there exists a universal constant $C$ such that \begin{align*}
A(t)\geq \frac{K}{3}\log B-C,
\end{align*}
which is equivalent to
\begin{align}\label{expA_lower_bound}
e^A\geq e^{-C}B^{\frac{K}{3}}.
\end{align}
Plugging this into the lower bound for $B'$ \eqref{Pinching_of_B'(t)_1}, we obtain that for all $B$ big enough, there exists universal constants $C_1,C_2,C_3,C_4$ such that
\begin{align*}
B'(t)\geq\frac{1-C_1e^{-K/2\log B}}{(1+C_2e^{-(k-1)\frac{K}{3}\log B})^2}B^{K-1}C_3=\frac{1-C_1B^{-K/2}}{(1+C_2 B^{-(k-1)K/3})^2}B^{K-1}C_3\geq C_4B^{K-1}.
\end{align*}
Therefore, we have that for all $t \geq 1,$
\begin{align}\label{B_lower_bound}
B\gtrsim t^{1/(2-K)}.
\end{align}
Combining \eqref{upper_bound_of_B}, \eqref{B_lower_bound}, \eqref{expA_lower_bound} and \eqref{expA_upper_bound}, we obtain the long time asymptotic behavior of $(A,B)$. This concludes the proof.
\end{proof}

\section{A Higher Order Boundary Layer Model of the Hou-Luo Scenario}

\subsection{The Setting}\label{hlblsetting}

Recall that here we will consider the following model:
\begin{subequations}\label{Higher_order_model1}
\begin{align}
\pa_t \omega+u_1\pa_{x_1}\omega&=\frac{\rho}{x_1},\label{vorticity_equation1}\\
\pa_t \rho+u_1\pa_{x_1}\rho&=0,\label{mass_transport_eq1}\\
u_1(x_1,x_2,t)&=-x_1\iint_{[0,\infty)^2}\frac{y_1y_2\omega(y)}{|y|^4}dy+x_2\omega(x_1,0,t).
\end{align}
\end{subequations}
Local well-posedness for \eqref{Higher_order_model} for the compactly supported away from the origin initial data $(\omega_0,\rho_0) \in H^s \times H^s$
is not hard to show - the argument is standard and will be omitted. A very similar argument can be found, for instance, in \cite{KT1}.

We begin by writing the system \eqref{Higher_order_model1} in Lagrangian coordinates. Define the flow map $\Phi_t(x_1^0,x_2)$ by
\begin{align}\label{trajbl}
\frac{d\Phi^1_t(x_1^0,x_2)}{dt}=u_1(\Phi_t^1(x_1^0,x_2),x_2,t),\quad \frac{d}{dt}\Phi_t^2(x_1^0,x_2)\equiv 0,\ \Phi_{t=0}(x_1^0,x_2)=(x_1^0,x_2).
\end{align}
We apply $(x_1^0,x_2)$ to denote the initial point of the flow-map $\Phi_t$, and $(x_1,x_2)$ to denote the current location, respectively.
To simplify the notation, we define the following quantities $J(t), D(t), Q(t), H(t)$:
\begin{align}
J(t):=&\iint_{[0,\infty)^2}\frac{y_1y_2\omega(y,t)}{|y|^4}dy;\label{Defn-I}\\
D(t):=&e^{\int_0^t J(s)ds}\label{Defn-G};\\
Q(t):=&\int_0^t D(s)ds=\sqrt{H(t)};\label{Defn:Q}\\
H(t):=&\left(\int_0^t D(s)ds\right)^2.\label{Defn-H}
\end{align}
Here the $Q(t)$ is the blow-up variable that, as we will see, reflects growth of vorticity. The strictly increasing quantity $H(t)$ can be viewed as an adapted time variable before the blow-up.
The equation \eqref{trajbl} can be explicitly written as
\begin{align}\label{Flow-map-ODE}
\frac{d}{dt}\Phi_t(x_1^0,x_2)=(-\Phi_t^1(x_1^0,x_2)J(t)+x_2\omega(\Phi_t^1(x_1^0,{x_2}),{0},t),0),\quad \Phi_{t=0}(x_1^0,x_2)=(x_1^0,x_2).
\end{align}
The mass transport equation \eqref{mass_transport_eq1} implies that the density $\rho$ is conserved along the flow characteristics $\Phi_t(x_1^0,x_2)$ before the blow-up time $T_\star:$
\begin{align*}
\rho(\Phi_t^1(x_1^0,x_2),x_2,t)=\rho_0(x_1^0,x_2),\quad \forall t\in [0,T_\star).
\end{align*}
On the other hand, by the vorticity equation \eqref{vorticity_equation1}, the vorticity evolves along the trajectories as follows:
\begin{align*}
\frac{d\omega(\Phi_t^1(x_1^0,x_2),x_2,t)}{dt}=&\frac{\rho(\Phi_t^1(x_1^0,x_2),x_2)}{\Phi_t^1(x_1^0,x_2)}=\frac{\rho_0(x_1^0,x_2)}{\Phi_t^1(x_1^0,x_2)}.
\end{align*}
Collecting all the equations above, we obtain the model \eqref{Higher_order_model1} in Lagrangian coordinate:
\begin{align}
\left\{\begin{array}{rrrr}\ba
\frac{d}{dt}\Phi_t^1(x_1^0,x_2)
=-\Phi_t^1(x_1^0,x_2)\iint_{[0,\infty)^2}\frac{y_1y_2 \omega(y,t)}{|y|^4}dy+x_2\omega(\Phi_t^1(x_1^0,{x_2}),{0},t),\\
\frac{d\omega(\Phi_t^1(x_1^0,x_2),x_2,t)}{dt}= \frac{\rho_0(x_1^0,x_2)}{\Phi_t^1(x_1^0,x_2)},\quad
\rho(\Phi_t^1(x_1^0,x_2),x_2,t)=\rho_0(x_1^0,x_2),\\
\omega_0\equiv 0,\quad \frac{\rho_0(x_1^0,x_2)}{x_1^0}=\varphi_\delta(x_1^0)\eta(x_2). \ea
\end{array}\right.\label{Basic_Model}
\end{align}
As pointed out above, our initial data will have $\omega_0 \equiv 0,$ while the initial density $\rho_0(x_1,x_2)$ will be compactly supported and equal to the product of $x_1 \varphi(x_1)$ and $\eta(x_2).$
Each factor is defined as follows. For the $x_1$ component, we have that
\begin{align}
\varphi_\delta(x_1):=\left\{\begin{array}{rr}\ba
{1}/\delta,&\quad x_1\in[\delta,L\delta<1],\\
0,&\quad x_1\in [0,{\delta}/{2}]\cup [(L+1)\delta,\infty),\\
C^{\infty},&\quad x_1 \text{ in other regions}.\ea\end{array}\right. \label{varphi_delta}
\end{align}
The parameter $L$ is a large parameter and $\delta\ll{1}$ is a small parameter that will be chosen later.
It is convenient for us to have $L\delta<1,$ and any choice of $L,$ $\delta$ we make will be done to ensure that.
The smooth decreasing $x_2$-factor $\eta(x_2)\in C_c^\infty(\rr_+)$ is defined as follows:
\begin{align}
\eta(x_2)=\left\{\begin{array}{rr} 1,&\quad x_2\in [0,1];\\
0,&\quad x_2\geq 2;\\
C^\infty,&\quad x_2\in[1,2].\end{array}\right.
\end{align}

\subsection{Preliminary Calculations}\label{Sec:Formula}

Now we prepare the necessary estimates for the blow-up argument, deriving an effective formula for the flow map $\Phi_t^1(x_1^0,x_2)$ and a sufficiently strong lower bound for the vorticity $\omega(x_1^0,x_2,t)$.

First we calculate the flow map $\Phi_t^1(x_1^0,0)$ on the $x_1$-axis, i.e., $x_2\equiv0$. Solving the flow-map ODE \eqref{Flow-map-ODE} and simplifying the result by adopting the notation of $D$ \eqref{Defn-G} yield that
\begin{align}
\Phi_t^1(x_1^0,0)=&x_1^0 e^{-\int_0^t J(s)ds}=x_1^0D(t)^{-1},\ \
\frac{\Phi_t^1(x_1^0,0,t)}{x_1^0}=D(t)^{-1},\quad \forall \,\, x_1^0\in\rr_+.\label{pax1_Phit_x2=0}
\end{align}

Calculation of the flow map $\Phi_t^1(x_1^0,x_2)$ for general $x_2\neq 0$ involves the boundary vorticity $\omega(\Phi_t^1(x_1^0,x_2),0,t)$.
It is worth mentioning that the second coordinate of the boundary vorticity, i.e., $0$, and the second coordinate of the flow-map involved, i.e., $x_2\neq 0$, do not match. Hence 
to determine the boundary vorticity $\omega(\Phi_t^1(x_1^0,x_2),0,t)$, we track the preimage of the current position $\Phi_t^{1}(x_1^0,x_2)$ under the flow-map along the $x_1$-axis. 
 We combine the equation for $\frac{\Phi_t^1(x_1^0,0,t)}{x_1^0}$ \eqref{pax1_Phit_x2=0} and the second equation and the fourth equation in \eqref{Basic_Model} to calculate the  vorticity along the characteristic  initiated from the position $(x_1^\star,0)$:
\begin{align}\label{Boundary_omega}
\omega(\Phi_t^1(x_1^\star, 0),0,t)=&\int_0^t\frac{\rho_0(x_1^\star,0)}{\Phi_s^1(x_1^\star,0)}ds
=\frac{\rho_0(x_1^\star,0)}{x_1^\star}\int_0^tD(s)ds=\varphi_{\delta}(x_1^\star)\int_0^tD(s)ds.
\end{align}
Here the notation $x_1^\star$ is introduced to avoid confusion later.
Next we  determine the preimage of the current $x_1$-projection $(\Phi_t^1(x_1^0,x_2),0)$ under the flow-map on the $x_1$-axis. Direct application of the relation \eqref{pax1_Phit_x2=0} yields that the inverse $\Phi_t^{-1} $ of the flow-map on the $x_1$-axis can be expressed as
\begin{align}
\Phi_t^{-1}\left(\Phi_t^1(x_1^0,x_2),0\right)=\left(D(t)\Phi_t^1(x_1^0,x_2),0\right)=:\left({U}(t,x_1^0,x_2),0\right).\label{Defn-U}
\end{align}
We call $U(t,x_1^0,x_2)$ the ``back-to-label map". Combining the equation of the boundary vorticity  \eqref{Boundary_omega} and the `back-to-label map' \eqref{Defn-U}, we obtain the explicit expression of the boundary vortcity $\omega(\Phi_t^1(x_1^0,x_2),0,t)$:
\begin{align}
\omega(\Phi_t^1(x_1^0,x_2),0,t)=&\omega\left(\Phi_t^1\left(\Phi
_t^{-1} (\Phi_t^1(x_1^0,x_2),0),0\right),0,t\right)\\
=&\frac{\rho_0\left(\Phi_t^{1}(x^0_1,x_2)D(t),0\right)}{{\Phi_t^1(x_1^0,x_2)D(t)}}\int_0^tD(s)ds.\label{Boundary_omega_backtracked}
\end{align}
Combining the explicit form of the boundary vorticity  \eqref{Boundary_omega_backtracked}, the definition of $U$ \eqref{Defn-U}  and the basic model \eqref{Basic_Model} yields that
\begin{align*}
\frac{d}{dt}\Phi_t^1(x_1^0,x_2)=-\Phi_t^1(x_1^0,x_2)J(t)+ x_2\frac{\rho_{0}(D(t)\Phi_t^1(x_1^0,x_2),0)}{D(t)\Phi_t^1(x_1^0,x_2)}\int_0^tD(s)ds.
\end{align*}
This equation can be simplified with  the definitions of $U$ \eqref{Defn-U} and of $D, H$ \eqref{Defn-G}, \eqref{Defn-H}:
\begin{align}
\frac{d}{dt}&\left(\Phi_t^1(x_1^0,x_2)D(t)\right)=x_2\frac{\rho_{0}(D(t)\Phi_t^1(x_1^0,x_2),0)}{D(t)\Phi_t^1(x_1^0,x_2)}\frac{1}{2}\frac{d}{dt}\left(\int_0^tD(s)ds\right)^2 \nonumber \\
&\Leftrightarrow \nonumber \\
\frac{d}{dt}&U(t,x_1^0,x_2)=\frac{x_2}{2}\frac{\rho_0(U(t,x_1^0,x_2),0)}{U(t,x_1^0,x_2)}\frac{d}{dt}H(t).\label{U-H}
\end{align}
Since by \eqref{varphi_delta}, we have that $\rho_0(x_1^0,0)/x_1^0=\frac{1}{\delta}$  on the interval $[\delta, L\delta]$,
it follows that if $U(s,x_1^0,x_2) \in [\delta, L\delta]$ for all $0 \leq s \leq t$ then
\begin{align}
 U(t,x_1^0,x_2)=x_1^0+\frac{x_2}{2\delta }H(t).\label{U_middle_range}
\end{align}
Thus if $y_1^0 \in [\delta, L \delta]$ and while $U(s,y_1^0,y_2)$ remains in this interval,
we can integrate the vorticity equation in \eqref{Basic_Model} and obtain that
\begin{align}
\omega(\Phi_t^1(y_1^0,y_2),y_2,t)=\rho_0(y_1^0,y_2)\int_0^t \frac{D(s)}{y_1^0+\frac{y_2}{2\delta}H(s)}ds.\label{Vorticity_mf2}
\end{align}
To obtain a useful lower bound on the vorticity, we will initially make the following assumption on the range of the original labels to watch, to be refined later:
\begin{align}\label{goodic}
\delta\leq y_1^0\leq L\delta/2, \quad y_2\leq \frac{\delta^2 L}{H(t)}.
\end{align}
Using \eqref{U_middle_range} we find that if \eqref{goodic} holds, then $\delta \leq y_1^0 \leq U(s,y_1^0,y_2) \leq L\delta$ for all $0 \leq s \leq t.$
Under this range constraint, we calculate that (recall that $Q(t)= \sqrt{H(t)} = \int_0^t D(s)\,ds$):
\begin{align*}
\omega(\Phi_t^1(y_1^0,y_2),y_2,t)=\frac{\rho_0(y_1^0,y_2)}{y_1^0}\int_0^{Q(t)} \frac{1}{1+(y_1^0)^{-1}\frac{y_2}{2\delta}Q^2(s)}d\underbrace{Q(s)}_{\int_0^s D(\tau)d\tau}= \\
\frac{\rho_0(y_1^0,y_2)}{y_1^0}\sqrt{\frac{2\delta y_1^0}{y_2}}\arctan\left(\sqrt{\frac{y_2}{2\delta y_1^0}}\int_0^t D(s)ds\right).
\end{align*}
Now we define the rescaled spacial variable \begin{align}\label{Y_2}
Y_2:=\frac{y_2}{\delta^2}H(t).
\end{align}
With the $Y_2$-variable, we represent the above expression for vorticity as
\begin{align}\label{vorteq926}
\omega(\Phi_t^1(y_1^0,y_2),y_2,t)=\frac{1}{\delta}\int_0^t D(\tau)d\tau\frac{\arctan\left(\sqrt{\delta Y_2/2y_1^0}\right)}{\sqrt{{\delta Y_2}/{2y_1^0}}}
\end{align}
if \eqref{goodic} holds.
To derive a suffucuently strong lower bound for vorticity, we will further restrict the set of labels that we monitor. Namely, let us require in addition that
\begin{align}\label{arctan926}
\frac{\arctan\left(\sqrt{\delta Y_2/2y_1^0}\right)}{\sqrt{{\delta Y_2}/{2y_1^0}}}\geq\frac{\arctan(1)}{1}=\frac{\pi}{4}.
\end{align}
Since the function $\frac{\arctan x}{x}$ is monotone decreasing in $x$ on $[0,\infty),$ for \eqref{arctan926} to hold it suffices enhance the first condition in \eqref{goodic} to
\begin{align}
\max\left\{\frac{1}{2}Y_2\delta,\delta\right\}\leq y_1^0\leq \frac{L\delta}{2},\quad 0 \leq y_2\leq \frac{\delta^2 L}{H(t)}. \label{Domain}
\end{align}
Applying the equation \eqref{U_middle_range}, the definition of $U$ \eqref{Defn-U}, and the fact that $U(s,y_1^0,y_2) \in [\delta, L\delta]$ for $0 \leq s \leq t$ for the initial conditions
satisfying \eqref{Domain}, we find that
\begin{align}
\Phi_t^1(y_1^0,y_2)\in \bigg[\max\left\{\frac{1}{2}Y_2\delta,\delta\right\}D(t)^{-1}+\frac{y_2}{2\delta}H(t)D(t)^{-1},\frac{L\delta}{2}D(t)^{-1}+\frac{y_2}{2\delta}H(t)D(t)^{-1}\bigg]=[Lo,Up].\label{Lo_Up}
\end{align}
Combining the above calculations with \eqref{vorteq926}, we conclude that for $y_1 \in [Lo,Up],$ $0 \leq y_2 \leq \frac{\delta^2 L}{H(t)}$, the vorticity satisfies
\begin{align}
\omega(t, y_1,y_2)\geq \frac{\pi}{4}\frac{\int_0^tD(\tau)d\tau}{\delta}. \label{lower_bound_of_omega}
 \end{align}

\subsection{Finite time blow up}\label{ftbu}
Now we are ready to derive a differential inequality that will lead to the finite time blow up argument.

Let us define
\begin{align}
E(t):= \delta^{-1} D(t)^{-1}H(t). \label{defn_E(t)}
\end{align}
Since $D(0)=1$ while $H(0)=0,$ we have that for some initial time period $E(t) \leq 1.$
Our first goal will be to show that we can choose $\delta$ and $L$ so that
\begin{align}\label{Ebound926}
E(t) \leq 2
\end{align}
for all times while the solution remains regular. Specifically, we have the following
\begin{lem}\label{propfin}
Let us choose any $L$ and $\delta$ such that
\begin{align}\label{choicedeltaL}
L > 5 e^{\frac{96}{\pi}}, \,\,\, 0 < \delta L \leq 1.
\end{align}
Then $E(t) \leq 2$ for all times $t$ while the solution remains regular.
\end{lem}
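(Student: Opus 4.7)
The plan is a continuity/bootstrap contradiction. Since $D(0)=1$ and $H(0)=0$, we have $E(0)=0$, and $E$ is continuous as long as the solution remains smooth. If the conclusion failed there would be a first time $t_0>0$ at which $E(t_0)=2$ and $E'(t_0)\ge 0$; the plan is to show that under the hypotheses \eqref{choicedeltaL} one actually has $E'(t_0)<0$, yielding the desired contradiction.

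First I rewrite $E$ as an ODE. Using $D'(t)=J(t)D(t)$ and $H'(t)=2D(t)Q(t)$, differentiating \eqref{defn_E(t)} gives
\[
E'(t) \;=\; \frac{2Q(t)}{\delta} \;-\; J(t)\,E(t).
\]
At $t=t_0$ this becomes $E'(t_0) = 2Q(t_0)/\delta - 2J(t_0)$, so it is enough to prove the strict lower bound $J(t_0) > Q(t_0)/\delta$.

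The heart of the proof is this lower bound. I will apply the pointwise vorticity estimate \eqref{lower_bound_of_omega}, which is valid on the image region \eqref{Lo_Up}. Restricting the defining integral for $J(t_0)$ to that image and changing variables to the Lagrangian labels $(y_1^0,y_2)$ satisfying \eqref{Domain} via the explicit formula from \eqref{U_middle_range},
\[
\Phi_t^1(y_1^0,y_2) \;=\; \frac{y_1^0 + y_2 H(t)/(2\delta)}{D(t)}, \qquad \frac{\pa \Phi_t^1}{\pa y_1^0} \;=\; \frac{1}{D(t)},
\]
one obtains
\[
J(t_0) \;\ge\; \frac{\pi Q(t_0)}{4\delta\, D(t_0)} \iint \frac{\Phi_{t_0}^1(y_1^0,y_2)\,y_2}{\bigl((\Phi_{t_0}^1(y_1^0,y_2))^2 + y_2^2\bigr)^2}\,dy_1^0\,dy_2,
\]
the domain of integration being the set \eqref{Domain}. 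I then introduce the rescaled variables $\alpha := y_1^0 + y_2H(t_0)/(2\delta)$ and $Y_2 := y_2 H(t_0)/\delta^2$, and exploit the identity $D(t_0)^2 y_2^2 = \delta^2 Y_2^2/E(t_0)^2 = \delta^2 Y_2^2/4$, which crucially uses $E(t_0)=2$. The $\alpha$-integral is then elementary (the antiderivative of $\alpha(\alpha^2+c^2)^{-2}$ is explicit), reducing the problem to estimating a one-dimensional integral of the schematic form $\int_0^L Y_2\bigl[1/(\alpha_{\min}^2+D^2 y_2^2) - 1/(\alpha_{\max}^2+D^2 y_2^2)\bigr]\,dY_2$. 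The leading contribution is of order $\log L$ and comes from the range $Y_2\in[2,L/2]$, where the first term behaves like $1/Y_2$ while the second is uniformly bounded in $L$.

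Carrying out this bookkeeping produces an inequality of the form $J(t_0) \ge (c_1\log L - c_2)\,Q(t_0)/\delta$ for explicit universal constants $c_1, c_2 > 0$, and the threshold $L>5 e^{96/\pi}$ in \eqref{choicedeltaL} is precisely what is needed to ensure $c_1\log L - c_2>1$, closing the bootstrap. The auxiliary constraint $\delta L\le 1$ plays a different role: it only guarantees that the Lagrangian region \eqref{Domain} lies inside the support of $\rho_0$, so that formulas \eqref{Vorticity_mf2} and \eqref{lower_bound_of_omega} apply. The main technical obstacle is precisely the careful tracking of constants: the $\log L$ gain on $[2,L/2]$ must not be diluted by a sloppy estimate of $\alpha_{\min}$, and the fact that $E(t_0)^2=4$ (rather than merely $E(t_0)^2\le 4$) must be used in order to keep the quantitative threshold in line with the stated one.
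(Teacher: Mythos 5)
Your proposal follows essentially the same route as the paper: a first-crossing/bootstrap argument reduced to a strict lower bound on $J$ at the crossing time, proved by restricting the integral in \eqref{I} to the image of the monitored label set \eqref{Domain}, inserting the pointwise bound \eqref{lower_bound_of_omega}, doing the explicit $y_1$-integration, rescaling to $Y_2$ with $E(t_0)=2$, and harvesting a $\log L$ gain from $Y_2\in[2,L]$. (Working in Lagrangian labels with Jacobian $1/D$ rather than in the Eulerian interval $[Lo,Up]$ is a cosmetic difference, as is phrasing the contradiction through $E'=2Q/\delta-JE$ instead of comparing $\tfrac1\delta H'$ with $D'$ as in \eqref{Goal}; note you only need $J>Q/\delta$ while the paper proves the stronger $J>2Q/\delta$, see \eqref{lbfin926}, so the threshold \eqref{choicedeltaL} is ample.)

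There is, however, one genuine gap: your justification for applying \eqref{lower_bound_of_omega} on the whole strip $0\le y_2\le \delta^2L/H(t_0)$. That bound (via \eqref{vorteq926}) uses $\rho_0(y_1^0,y_2)/y_1^0=1/\delta$, which requires $\eta(y_2)=1$, i.e.\ $y_2\le 1$; for $y_2>2$ the labels leave the support of $\rho_0$ and the vorticity there is zero, so the bound simply fails. You assert that the hypothesis $\delta L\le 1$ ``guarantees that the Lagrangian region \eqref{Domain} lies inside the support of $\rho_0$,'' but by itself it does not: for small $t$ one has $\delta^2L/H(t)\gg 1$ and the region pokes out of $\{\eta=1\}$. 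What is needed is the paper's observation \eqref{caseA}: at the crossing time, $E(t_0)=2$ together with $D(t_0)\ge 1$ gives $H(t_0)=2\delta D(t_0)\ge 2\delta\ge 2\delta^2L$ (using $\delta L\le1$), hence $\delta^2L/H(t_0)\le 1$, so the $y_2$-range is contained in $[0,1]$ and the rescaled variable runs over $[0,L]$. This is a one-line repair, but as written your argument applies \eqref{lower_bound_of_omega} on a region where it has not been established. A smaller caveat: the final constant bookkeeping (your $c_1\log L-c_2$) is only sketched, and the claim that the threshold in \eqref{choicedeltaL} is ``precisely'' what your constants require is not verified — with reasonable constants your version needs noticeably less than $\log(L/5)>96/\pi$, so the stated threshold suffices, but you should either carry out the computation or defer to the paper's explicit chain leading to \eqref{lbfin926}.
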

\begin{proof}
Suppose, on the contrary, that there exists $t_1>0$ such that $E(t_1)=2$ while $E(t) <2$ for all $0 \leq t < t_1.$
Our goal is to show that in this case
\begin{align}
\frac{1}{\delta}H'(t_1)<D'(t_1). \label{Goal}
\end{align}
This would imply that $E(t)>2$ for some $t<t_1$ furnishing a contradiction.
Now to prove \eqref{Goal}, we calculate a lower bound for
\begin{align}\label{I}
D(t_1)^{-1}D'(t_1)=J(t_1)=&\int_{y_2=0}^{\infty}\int_{y_1=0}^\infty\frac{y_1y_2}{|y|^4}\omega(t_1,y)dy_1dy_2.
\end{align}
First, we observe that
\begin{equation}\label{caseA}
\frac{\delta^2 L}{H(t_1)} \leq 1.
\end{equation}
 Indeed, if $\frac{\delta^2 L}{H(t_1)} > 1$ then
$\delta^{-1} H(t_1) \leq \delta L \leq 1,$
while $D(t) \geq 1$ for all $t$ while solution exists (this follows from positivity of vorticity in the first quadrant and the definition of $D(t)$ \eqref{Defn-G}).
Thus we get $E(t_1) \leq 1,$ a contradiction.

Since vorticity is non-negative, we have that
\begin{align}
J(t_1)\geq \int_{0}^{\frac{\delta^2L}{H(t_1)}}\int_{Lo}^{Up}\frac{y_1y_2}{|y|^4}\omega(t_1,y_1,y_2)dy_1dy_2.\label{Region_of_interest}
\end{align}
Using \eqref{lower_bound_of_omega}, the definition of $E(t)$ \eqref{defn_E(t)}, and the definition of time $t_1,$ we estimate
\begin{align} \nonumber
J(t_1) \geq \frac{\pi}{8}\int_{0}^{\frac{\delta^2 L}{H(t_1)}}\left(\frac{y_2}{Lo^2+y_2^2}-\frac{y_2}{Up^2+y_2^2}\right)\frac{\int_0^{t_1} D(s)ds}{\delta}dy_2\\ \nonumber
\geq \frac{\pi}{16 \delta}\int_{0}^{\frac{\delta^2 L}{H(t_1)}}\frac{\int_0^{t_1} D(s)ds \left((L/2)^2-\max\left\{\frac{1}{2}Y_2,1\right\}^2\right)\delta^2 D(t_1)^{-2}y_2\, dy_2}{(\max\left\{\frac{1}{2}Y_2,1\right\}^2\delta^2 D^{-2}(t_1)+y_2^2(1+\frac{1}{4}E(t_1)^2)(\frac{\delta^2 L^2}{4}D(t_1)^{-2}+y_2^2(1+\frac{1}{4}E(t_1)^2)}\\
=\frac{\pi}{16}\frac{\int_0^{t_1} D(s)ds}{\delta}\int_{0}^{\frac{\delta^2 L}{H(t_1)}}\left(\frac{1}{\max\left\{\frac{1}{2}Y_2,1\right\}^2\delta^2 D(t_1)^{-2}+2 y_2^2}
-\frac{1}{\frac{\delta^2L^2}{4} D(t_1)^{-2}+ 2 y_2^2}\right)y_2dy_2. \nonumber
\end{align}
Now we apply the definition \eqref{defn_E(t)} of $E(t)$  and of $Y_2$-variable \eqref{Y_2} to rewrite the integral, and direct integration yields
\begin{align}
J(t_1)\geq&\frac{\pi}{16}\frac{\int_0^{t_1} D(s)ds}{\delta}\left(\int_{0}^2+\int_{2}^{L}\right)\left(\frac{1}{\max\left\{\frac{1}{2}Y_2,1\right\}^2+ 2 Y_2^2}-\frac{1}{ L^2+2 Y_2^2}\right)Y_2dY_2 \nonumber \\ 
\geq&\frac{\pi}{16}\frac{\int_0^{t_1} D(s)ds}{\delta}\int_{Y_2=2}^{L}\left(\frac{1}{3 Y_2}-\frac{Y_2}{L^2+2 Y_2^2}\right)dY_2 \nonumber \\
\geq &\frac{\pi}{16}\frac{\int_0^{t_1} D(s)ds}{\delta}\left(\frac{1}{3}\log \frac{L}{2 }+\frac{1}{4}\log\left(\frac{L^2+8}{3L^2}\right)\right) \geq \frac{\pi}{48}\frac{\int_0^{t_1} D(s)ds}{\delta} \log \frac{L}{5}. \label{lbfin926}
\end{align}
Due to the choice of the parameter $L$ \eqref{choicedeltaL}, it follows that
\[ J(t_1) > \frac{2}{\delta} \int_0^{t_1} D(s)ds, \]
and this implies \eqref{Goal}.
\end{proof}



Finally, we prove finite time blow up and thus Theorem~\ref{Thm2}.
\begin{proof}[Proof of Theorem \ref{Thm2}]
Let us apply the relation \eqref{Ebound926} to obtain the following differential inequality for $Q(t)=\sqrt{H(t)}:$
\begin{align}\label{diffineqftbu926}
Q'(t)=D(t)\geq \frac{1}{2\delta}H(t)= \frac{1}{2\delta}Q(t)^2.
\end{align}
However, since $Q(\ep)=\int_0^\ep D(s)ds$ and  $D(s) \geq 1$ for all $s$ while solution exists, we have that $Q(t)>0$ is positive for any small time $t =\ep >0$.
Considering the differential inequality \eqref{diffineqftbu926} for $t\geq\ep$, we see that the solution $Q(t)$ blows up at a finite time $T_\star <\infty$
which by \eqref{lower_bound_of_omega} implies that $\lim_{t \rightarrow T_\star} \|\omega(\cdot, t)\|_{L^\infty} = \infty.$
\end{proof}

{\bf Acknowledgement}. \rm
The authors acknowledge partial support of the NSF-DMS grants 1848790, 2006372 and 2006660.

\bibliographystyle{abbrv}


\end{document}